\newtheorem{Theorem}{Theorem}[section]
\newtheorem{Lemma}[Theorem]{Lemma}
\newtheorem{Prop}[Theorem]{Proposition}
\def\cB{\mathcal{B}}
\def\cC{\mathcal{C}}
\def\cL{\mathcal{L}}
\def\cS{\mathcal{S}}
\def\Erw{\mathbb{E}}
\def\N{\mathbb{N}}
\def\Prob{\mathbb{P}} 
\def\R{\mathbb{R}}
\def\U{\mathbb{U}}
\def\V{\mathbb{V}}
\def\Z{\mathbb{Z}}
\def\sfu{\mathsf{u}}
\def\sfv{\mathsf{v}}
\def\eps{\varepsilon}
\def\1{\vec{1}}
\def\3{{\ss}}
\def\llam{\lambda\hspace{-5.1pt}\lambda}
\def\wh{\widehat}
\def\ovl{\overline}
\def\sg{\sigma^{>}}
\def\ij{\,i\!j}
\def\jj{j\!j}
\def\sfu{\mathsf{u}}
\def\sfv{\mathsf{v}}
\begin{document}

\title*{Quasi-stochastic matrices and Markov renewal theory}
\titlerunning{Quasi-stochastic matrices and Markov renewal theory}
\author{Gerold Alsmeyer}
\institute{Gerold Alsmeyer \at Inst.~Math.~Statistics, Department
of Mathematics and Computer Science, University of M\"unster,
Einsteinstrasse 62, D-48149 M\"unster, Germany.\at
\email{gerolda@math.uni-muenster.de}\\
Research supported by the Deutsche Forschungsgemeinschaft (SFB 878)}

\maketitle

\abstract{Let $\cS$ be a finite or countable set. Given a matrix $F=(F_{\ij})_{i,j\in\cS}$ of distribution functions on $\R$ and a quasi-stochastic matrix $Q=(q_{\ij})_{i,j\in\cS}$, i.e.\ an irreducible nonnegative matrix with maximal eigenvalue 1 and associated unique (modulo scaling) positive left and right eigenvectors $\sfu,\sfv$, the matrix renewal measure $\sum_{n\ge 0}Q^{n}\otimes F^{*n}$ associated with $Q\otimes F:=(q_{\ij}F_{\ij})_{i,j\in\cS}$ (see below for precise definitions) and a related Markov renewal equation are studied. This was done earlier by de Saporta \cite{Saporta:03} and Sgibnev \cite{Sgibnev:01,Sgibnev:06} by drawing on potential theory, matrix-analytic methods and Wiener-Hopf techniques. The purpose of this article is to describe a quite different probabilistic approach which embarks on the observation that $Q\otimes F$ turns into an ordinary semi-Markov matrix after a harmonic transform. This allows us to relate $Q\otimes F$ to a Markov random walk $(M_{n},S_{n})_{n\ge 0}$ with discrete recurrent driving chain $(M_{n})_{n\ge 0}$. It is then shown that renewal theorems including a Choquet-Deny-type lemma may be easily established by resorting to standard renewal theory for ordinary random walks. Three typical examples are presented at the end of the article.
}

\bigskip

{\noindent \textbf{AMS 2000 subject classifications:}
60K05 (60J10, 60J45, 60K15) \ }

{\noindent \textbf{Keywords:} Quasi-stochastic matrix, Markov random walk, Markov renewal equation, Markov renewal theorems, spread out, Stone-type decomposition, minimum of Markov random walk, age-dependent multitype branching process, random difference equation, perpetuity}

\section{Introduction and main results}\label{sec:intro}

Quasi-stochastic matrices (see below for the formal definition) are a generalization of stochastic matrices and thus of transition matrices of Markov chains with countable state space. In applications, three of which may be found in the final section of this article, such matrices appear when studying the limit behavior of certain functionals of processes which are driven by discrete Markov chains. These processes, called \emph{Markov random walks} or \emph{Markov-additive processes}, are characterized by having increments which are conditionally independent given the driving chain. Moreover, the conditional distribution of the $n^{th}$ increment depends only on the state of the chain at time $n-1$ and $n$. Aiming at limit results as just mentioned, our main purpose is to show that via a harmonic transform quasi-stochasticity may easily be reduced to stochasticity and thus to ordinary transition matrices. This in turn allows the use of more intuitive probabilistic arguments instead of analytic ones. Further information will follow below after a description of the basic setup.

\vspace{.2cm}
We proceed with a definition of a quasi-stochasticity.
Let $\cS=\{1,...,m\}$ for some $m\in\N$ or $\cS=\N$. Suppose we are given an irreducible nonnegative matrix $Q=(q_{\ij})_{i,j\in\cS}$ with maximal eigenvalue 1 for which there exist unique positive left and right eigenvectors $\sfu=(\sfu_{i})_{i\in\cS},\sfv=(\sfv_{i})_{i\in\cS}$ modulo scaling, thus
\begin{equation}\label{eigenvalue one}
\sfu^{\top}Q=\sfu^{\top}\quad\text{and}\quad Q\sfv=\sfv.
\end{equation}
A matrix of this kind will be called \emph{quasi-stochastic} hereafter. If $\cS$ is finite or, more generally, $\sum_{i\in\cS}\sfu_{i}<\infty$ and $\sfu^{\top}\sfv<\infty$, strict uniqueness is rendered upon choosing the normalization
\begin{equation}\label{Perron eigenvector normalized}
\sum_{i\in\cS}\sfu_{i}=1\quad\text{and}\quad\sfu^{\top}\sfv=\sum_{i\in\cS}\sfu_{i}\sfv_{i}=1.
\end{equation}
Note that under these assumptions all powers $Q^{n}=(q_{\ij}^{(n)})_{i,j\in\cS}$ are also nonnegative matrices with finite entries (plainly a nontrivial statement only if $\cS$ is infinite).

The example that comes to mind first is when $Q$ equals the transition matrix of a recurrent discrete Markov chain on $\cS$ and thus a proper stochastic matrix for which the left eigenvector $\sfu$ is the essentially unique stationary measure of the chain. In the positive recurrent case, one may choose $\sfu$ as the unique stationary distribution and $\sfv=(1,1,...)^{\top}$.

\vspace{.2cm}
Next, let $F_{\ij}$ for $i,j\in\cS$ be proper distribution functions on $\R$, thus nondecreasing, right continuous with limit 0 at $-\infty$ and 1 at $+\infty$. Define the matrix function
$$ \R\ \ni\ t\ \mapsto\ Q\otimes F(t)\ =\ ((Q\otimes F)_{\ij}(t))_{i,j\in\cS}\ :=\ (q_{\ij}F_{\ij}(t))_{i,j\in\cS}, $$
where $F(t):=(F_{\ij}(t))_{i,j\in\cS}$. If $B(t)=(B_{\ij}(t))_{i,j\in\cS}$ denotes another matrix of real-valued functions, the convolution $(Q\otimes F)*B$ of $Q\otimes F(t)$ and $B(t)$ is defined as
\begin{equation*}
((Q\otimes F)*B)_{\ij}(t)\ :=\ \sum_{k\in\cS}\int_{\R}B_{kj}(t-x)\ (Q\otimes F)_{ik}(dx)\quad (i,j\in\cS,\ t\in\R),
\end{equation*}
provided that the integrals exist. Since
\begin{equation*}
((Q\otimes F)*(Q\otimes F))_{\ij}(t)\ =\ \sum_{k\in\cS}q_{ik}q_{kj}F_{ik}*F_{kj}(t)\ \le\ \sum_{k\in\cS}q_{ik}q_{kj}\ =\ q_{\ij}^{(2)}
\end{equation*}
for all $i,j\in\cS$, we find that $(Q\otimes F)^{*2}$ exists (as a componentwise finite-valued function) and then upon induction over $n$ the very same for $(Q\otimes F)^{*n}$, recursively defined by
$$ (Q\otimes F)^{*n}(t)\ =\ (Q\otimes F)*(Q\otimes F)^{*(n-1)}(t)\quad (t\in\R) $$
for $n\ge 1$, where $A^{*0}(t)$ equals the identity matrix for each $t\ge 0$ and any matrix function $A$. The induction also shows that
\begin{equation*}
(Q\otimes F)^{*n}(t)\ =\ \big(q_{\ij}^{(n)}F_{\ij}^{*n}(t)\big)_{i,j\in\cS}\ =\ Q^{n}\otimes F^{*n}(t)\quad (t\in\R,\,n\in\N_{0}).
\end{equation*}

Of particular interest in this work is the \emph{matrix renewal measure associated with $Q\otimes F$}, viz.
$$ \V((t,t+h]) :=\ \sum_{n\ge 0}\big((Q\otimes F)^{*n}(t+h)-(Q\otimes F)^{*n}(t)\big)\quad (t\in\R,\,h>0) $$
under conditions ensuring that the entries of $\V=(\V_{\ij})_{i,j\in\cS}$ are Radon measures. The matrix measure $\V$ arises in connection with the solution $Z(t)=(Z_{i}(t))_{i\in\cS}$ of a system of renewal equations, namely
\begin{equation*}
Z_{i}(t)\ =\ z_{i}(t)\ +\ \sum_{j\in\cS}q_{\ij}\int_{\R}Z_{j}(t-x)\ F_{\ij}(dx)\quad (t\in\R,\,i\in\cS),
\end{equation*}
shortly written as $Z=z+(Q\otimes F)*Z$, where $z(t)=(z_{i}(t))_{i\in\cS}$ is a vector of real-valued functions. Indeed, if 
$$ Z(t)\ =\ \V*z(t)\ =\ \big(\V_{i}*z(t)\big)_{i\in\cS}\ =\ \left(\sum_{j\in\cS}q_{\ij}^{(n)}\int_{\R}z_{j}(t-x)\ F_{\ij}^{*n}(dx)\right)_{i\in\cS} $$ 
exists for all $t\in\R$, then it forms a solution which is even unique under additional assumptions as we will see later.

Apart from allowing $\cS$ to be infinite, our setup is the same as in the papers by de Saporta \cite{Saporta:03} and Sgibnev \cite{Sgibnev:06} who derive a Blackwell-type renewal theorem for $\V$ and determine the asymptotic behavior of $Z(t)=\V*z(t)$ under appropriate conditions. De Saporta's approach is based on potential theory and rather technical, while Sgibnev uses a matrix-analytic approach in combination with a matrix Wiener-Hopf factorization as described in \cite{Asmussen:89}. The main purpose of this article is to provide a different, purely probabilistic approach within the framework of discrete Markov renewal theory which not only allows us to interpret assumptions in a more natural context but is also considerably simpler. The latter is due to the fact that main results in discrete Markov renewal theory, which deals with random walks driven (or modulated) by a recurrent Markov chain with discrete state space, can be easily deduced from classical renewal theory dealing with ordinary random walks with positive drift. This is done by drawing on stopping times, occupation measures and regeneration techniques and will be demonstrated in Section \ref{sec:DMRT}, for it has apparently never been carried out in the literature (though a similar approach may already be found in the classical paper by Athreya, McDonald and Ney \cite{Athreya+et.al.:78a}). For basic definitions and properties of Markov random walks and Markov renewal processes with discrete driving chain we refer to the textbooks by Asmussen \cite[p. 206ff]{Asmussen:03} and \c{C}inlar \cite[Ch. 10]{Cinlar:75b}, or \cite{Cinlar:69}.

\vspace{.1cm}
Besides quasi-stochasticity, the following two standing assumptions about $Q$ will be made throughout this work:
\begin{equation}\label{eq:recurrence of Q}\tag{A1}
\sum_{n\ge 1}q_{ii}^{(n)}\ =\ \infty\quad\text{for some }i\in\cS.
\end{equation}
\begin{equation}\label{eq:positive drift}\tag{A2}
\mu\ :=\ \sum_{i\in\cS}\sum_{j\in\cS}\sfu_{i}\,q_{\ij}\,\sfv_{j}\int x\ F_{\ij}(dx)\ >\ 0.
\end{equation}
In terms of the stochastic matrix $P$ associated with $Q$, to be introduced in Section \ref{sec:setup} below, condition \eqref{eq:recurrence of Q} means that $P$ is recurrent, while \eqref{eq:positive drift} ensures that the Markov random walk associated with $P\otimes F$ has positive stationary drift (see Lemma \ref{lem:properties MRW}). Since $Q$ (and thus $P$) is irreducible, it follows by solidarity that \eqref{eq:recurrence of Q} actually implies $\sum_{i\in\cS}q_{ii}^{(n)}=\infty$ \emph{for all} $i\in\cS$. Moreover, \eqref{eq:recurrence of Q} automatically holds if $\cS$ is finite.

\vspace{.1cm}
We further need the following lattice-type condition on $Q\otimes F$ which is due to Shurenkov \cite{Shurenkov:84}: $Q\otimes F$ is called $d$-arithmetic, if $d$ is the maximal positive number such that
\begin{equation}\label{eq:lattice condition}
F_{\ij}(\gamma(j)-\gamma(i)+d\Z)=F_{\ij}(\infty)
\end{equation}
for all $i,j\in\cS$ with $\sfu_{i}\,q_{\ij}\,\sfv_{j}>0$ and some measurable $\gamma:\cS\to [0,d)$, called shift function. If no such $d$ exists, $Q\otimes F$ is called nonarithmetic. Notice that \eqref{eq:lattice condition} for all $i,j$ as stated implies
\begin{equation*}
F_{\ij}^{*n}(\gamma(j)-\gamma(i)+d\Z)=F_{\ij}^{*n}(\infty)
\end{equation*}
for all $i,j\in\cS$ with $\sfu_{i}\,q_{\ij}^{(n)}\,\sfv_{j}>0$ and all $n\in\N$. Consequently, if $F_{\ij}^{*n}$ is nonsingular with respect to Lebesgue measure $\llam$ for some $n\in\N$ and $i,j\in\cS$ with $\sfu_{i}\,q_{\ij}^{(n)}\,\sfv_{j}>0$, then $Q\otimes F$ must be nonarithmetic and is called \emph{spread out}. As in the classical renewal setup, this property entails a Stone-type decomposition of the matrix renewal measure $\V$ which in turn leads to some improvements of the renewal results on $\V$ in the nonarithmetic case.

We proceed to the statement of our main results all proofs of which are presented in Section \ref{sec:proofs}. For the sake of brevity we restrict ourselves to the case of nonarithmetic $Q\otimes F$ but note that all given results have obvious arithmetic counterparts which are obtained in a similar manner.


If $\cS$ is finite, the following result is Theorem 3 in \cite{Saporta:03} and Theorem 1 in \cite{Sgibnev:06}. 

\begin{Theorem}\label{MRT for Q}
Let $Q$ be a quasi-stochastic matrix satisfying \eqref{eq:recurrence of Q} and \eqref{eq:positive drift} and suppose that $Q\otimes F$ is nonarithmetic. Then the associated renewal measure $\V$ satisfies
\begin{equation*}
\lim_{t\to\infty}\V_{\ij}((t,t+h])\ =\ \frac{\sfv_{i}\,\sfu_{j}h}{\mu}\quad\text{and}\quad\lim_{t\to-\infty}\V_{\ij}((t,t+h])\ =\ 0
\end{equation*}
for all $h>0$ and $i,j\in\cS$.
\end{Theorem}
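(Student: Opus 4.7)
The plan is to harmonically transform $Q$ into an ordinary stochastic matrix $P$, thereby identifying $P\otimes F$ with the transition mechanism of a Markov random walk driven by a recurrent discrete chain, and then to invoke the Blackwell-type Markov renewal theorem for such processes (which will be established in Section~\ref{sec:DMRT}).

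First I would introduce the Doob-type change of variables
$$p_{\ij}\ :=\ \frac{q_{\ij}\sfv_{j}}{\sfv_{i}}\qquad(i,j\in\cS),$$
which, thanks to $Q\sfv=\sfv$, yields a stochastic matrix $P=(p_{\ij})$ with stationary probability vector $\pi=(\sfu_{i}\sfv_{i})_{i\in\cS}$. An easy induction gives $p_{\ij}^{(n)}=q_{\ij}^{(n)}\sfv_{j}/\sfv_{i}$, and since the convolutions $F_{\ij}^{*n}$ are untouched, the matrix renewal measures are linked by
$$\V_{\ij}((t,t+h])\ =\ \frac{\sfv_{i}}{\sfv_{j}}\,\U_{\ij}((t,t+h]),$$
where $\U:=\sum_{n\ge 0}(P\otimes F)^{*n}$. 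Thus the conclusion for $\V$ follows from the corresponding one for $\U$.

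Next I would translate the standing assumptions. Recurrence survives the harmonic transform since $p_{ii}^{(n)}=q_{ii}^{(n)}$, so \eqref{eq:recurrence of Q} makes $P$ a recurrent stochastic matrix. Substituting gives
$$\sum_{i,j\in\cS}\pi_{i}\,p_{\ij}\int x\,F_{\ij}(dx)\ =\ \sum_{i,j\in\cS}\sfu_{i}\,q_{\ij}\,\sfv_{j}\int x\,F_{\ij}(dx)\ =\ \mu\ >\ 0,$$
so the Markov random walk $(M_{n},S_{n})_{n\ge 0}$ determined by $P\otimes F$ has positive stationary drift $\mu$. Since $\pi_{i}p_{\ij}=\sfu_{i}q_{\ij}\sfv_{j}$, the Shurenkov nonarithmeticity conditions on $P\otimes F$ and $Q\otimes F$ coincide. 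The Markov renewal theorem from Section~\ref{sec:DMRT} then delivers
$$\lim_{t\to\infty}\U_{\ij}((t,t+h])\ =\ \frac{\pi_{j}h}{\mu}\ =\ \frac{\sfu_{j}\sfv_{j}h}{\mu},$$
while the positive drift forces $\U_{\ij}((t,t+h])\to 0$ as $t\to-\infty$. Multiplying by $\sfv_{i}/\sfv_{j}$ yields the stated limits.

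The real work lies not in the short reduction above but in its input from Section~\ref{sec:DMRT}: one has to establish the Blackwell limit for $\U$ on a possibly \emph{countable} state space under only the bare recurrence hypothesis \eqref{eq:recurrence of Q}, so that null recurrence of $P$ is permitted. This is where I expect the main obstacle to lie; as foreshadowed in the introduction, it will be handled by fixing a recurrent state $i_{0}\in\cS$, embedding the successive return times to $i_{0}$, and reducing the Markov renewal problem to classical one-dimensional Blackwell theory via occupation measures and regeneration.
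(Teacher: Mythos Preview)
Your proposal is correct and follows essentially the same route as the paper: the harmonic transform $P=D^{-1}QD$ (Section~\ref{sec:setup}), the identity $\V_{\ij}=(\sfv_{i}/\sfv_{j})\,\U_{\ij}$ (Lemma~\ref{lem:MRM connection}), and then an appeal to the Blackwell-type Markov renewal theorem (Theorem~\ref{thm:MRT1}) established via the regeneration argument you describe. One small caveat: $\pi=(\sfu_{i}\sfv_{i})_{i\in\cS}$ need only be a stationary \emph{measure}, not a probability vector, since \eqref{eq:recurrence of Q} allows null recurrence; but this does not affect your argument, as the ratio $\pi_{j}/\mu$ is scale-invariant.
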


The next result provides a Stone-type decomposition of $\V$ that for finite $\cS$ was derived by other means in \cite[Theorem 2]{Sgibnev:01} (one-sided case) and \cite[Theorem 5]{Sgibnev:06}.

\begin{Theorem}\label{Stone-type & MRT for Q}
Let $Q$ be a quasi-stochastic matrix satisfying \eqref{eq:recurrence of Q} and \eqref{eq:positive drift} and suppose that $Q\otimes F$ is spread out. Then the associated renewal measure allows a Stone-type decomposition $\V=\V^{1}+\V^{2}$, where
\begin{description}[xxx]
\item[(a)] $\V^{1}=(\V_{\ij}^{1})_{i,j\in\cS}$ consists of finite measures $\V_{\ij}^{1}\,$,\vspace{.05cm}
\item[(b)] $\V^{2}=(\V_{\ij}^{2})_{i,j\in\cS}$ consists of $\llam$-continuous measures $\V_{\ij}^{2}$ with densities $h_{\ij}$ that are bounded, continuous and satisfy
$$ \lim_{t\to\infty}h_{\ij}(t)\ =\ \frac{\sfv_{i}\sfu_{j}}{\mu}\quad\text{and}\quad\lim_{t\to\infty}h_{\ij}(t)\ =\  0 $$
for all $i,j\in\cS$.
\end{description}
Furthermore,
\begin{equation*}
\lim_{t\to\infty}\sup_{\cB(\R)\ni B\subset [0,h]}\left|\V_{\ij}(B)-\frac{\sfv_{i}\sfu_{j}\llam(B)}{\mu}\right|\ =\ 0
\end{equation*}
for all $h>0$ and $i,j\in\cS$.
\end{Theorem}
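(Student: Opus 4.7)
The plan is to reduce the assertion to the Stone-type decomposition for ordinary discrete Markov renewal measures (developed in Section~\ref{sec:DMRT}) via the harmonic transform that converts $Q$ into the stochastic matrix $P=(p_{\ij})_{i,j\in\cS}$ with entries $p_{\ij}:=q_{\ij}\sfv_{j}/\sfv_{i}$. Row-stochasticity of $P$ is immediate from $Q\sfv=\sfv$, and the distribution $\pi_{i}:=\sfu_{i}\sfv_{i}$, a probability measure by \eqref{Perron eigenvector normalized}, is stationary for $P$ in view of \eqref{eigenvalue one}. Thus $P\otimes F$ is the semi-Markov transition kernel of a Markov random walk $(M_{n},S_{n})_{n\ge 0}$, and a straightforward induction on $n$ produces the telescoping identity
$$ q_{\ij}^{(n)}\,F_{\ij}^{*n}(B)\ =\ \frac{\sfv_{i}}{\sfv_{j}}\,p_{\ij}^{(n)}\,F_{\ij}^{*n}(B)\qquad(i,j\in\cS,\,n\in\N_{0},\,B\in\cB(\R)), $$
so that $\V_{\ij}=(\sfv_{i}/\sfv_{j})\,\U_{\ij}$, where $\U_{\ij}(B):=\sum_{n\ge 0}p_{\ij}^{(n)}F_{\ij}^{*n}(B)$ is the Markov renewal measure of $(M_{n},S_{n})_{n\ge 0}$.

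Since the spread-out property depends only on the $F_{\ij}^{*n}$ and on the positivity of the quantities $\sfu_{i}q_{\ij}^{(n)}\sfv_{j}=\pi_{i}p_{\ij}^{(n)}$, it is the same property for $Q\otimes F$ and $P\otimes F$; moreover, the stationary drift of $(M_{n},S_{n})_{n\ge 0}$ equals $\mu$ by \eqref{eq:positive drift} and Lemma~\ref{lem:properties MRW}. Invoking the Stone-type decomposition for the spread-out Markov renewal measure $\U$ (established in Section~\ref{sec:DMRT} by regenerating at a recurrent reference state and reducing to the classical Stone decomposition for an ordinary random walk with positive drift) yields $\U_{\ij}=\U_{\ij}^{1}+\U_{\ij}^{2}$ with $\U_{\ij}^{1}$ finite and $\U_{\ij}^{2}$ absolutely continuous, the latter having a bounded continuous density $g_{\ij}$ that satisfies $g_{\ij}(t)\to\pi_{j}/\mu=\sfu_{j}\sfv_{j}/\mu$ as $t\to\infty$ and $g_{\ij}(t)\to 0$ as $t\to-\infty$. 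Setting $\V_{\ij}^{k}:=(\sfv_{i}/\sfv_{j})\,\U_{\ij}^{k}$ for $k=1,2$ then gives the asserted decomposition with densities $h_{\ij}=(\sfv_{i}/\sfv_{j})\,g_{\ij}$ whose limits at $\pm\infty$ are $\sfv_{i}\sfu_{j}/\mu$ and $0$, respectively.

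The uniform Blackwell-type conclusion drops out almost for free from the decomposition: $\V_{\ij}^{1}((t,t+h])\to 0$ as $t\to\infty$ by finiteness of $\V_{\ij}^{1}$, while continuity of $h_{\ij}$ and its limit at $+\infty$ give $\sup_{s\in[0,h]}|h_{\ij}(t+s)-\sfv_{i}\sfu_{j}/\mu|\to 0$, whence
$$ \sup_{\cB(\R)\ni B\subset[0,h]}\left|\V_{\ij}(t+B)-\frac{\sfv_{i}\sfu_{j}\,\llam(B)}{\mu}\right|\ \longrightarrow\ 0 $$
by combining the two bounds with $\llam(B)\le h$. The main obstacle lies inside Section~\ref{sec:DMRT}, namely verifying that the spread-out hypothesis on $P\otimes F$ transfers to the excursion random walk between successive visits of $(M_{n})_{n\ge 0}$ to a fixed recurrent reference state, so that the classical Stone decomposition becomes applicable. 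Concretely, one has to exhibit a cycle through the reference state along which the convolution of the relevant $F_{\ij}^{*n_{k}}$ acquires a nontrivial $\llam$-continuous component, a step that is routine but calls for care with Fubini and with the combinatorics of such cycles; once available, the transfer of regenerated results back to $\V$ via the scalar factor $\sfv_{i}/\sfv_{j}$ is completely mechanical.
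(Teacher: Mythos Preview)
Your proof is correct and follows essentially the same route as the paper: transfer to the stochastic matrix $P$ via the harmonic transform, invoke the componentwise Stone-type decomposition $\U_{\ij}=\U_{\ij}^{1}+\U_{\ij}^{2}$ from Section~\ref{sec:DMRT} (Proposition~\ref{prop:Stone-type U_ij}), and pull back by the scalar factor $\sfv_{i}/\sfv_{j}$; the paper's own proof is in fact terser than yours, merely citing Lemma~\ref{lem:MRM connection} and Theorem~\ref{thm:MRM spread out} and writing $\V=D\,\U^{1}D^{-1}+D\,\U^{2}D^{-1}$ before declaring that ``further details can be omitted.'' Your additional remarks on the uniform Blackwell conclusion and on the transfer of the spread-out hypothesis to the excursion random walk at a reference state are exactly the details the paper suppresses (the latter is handled in the proof of Proposition~\ref{prop:Stone-type U_ij}).
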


Turning to the functional version of the two previous results, consider a positive sequence $\lambda=(\lambda_{i})_{i\in\cS}$ and a measurable function $g:\cS\times\R\to\R$. The function $g$ is called \emph{$\lambda$-directly Riemann integrable} if
\begin{align}
&g_{i}\text{ is $\llam$-almost everwhere continuous for all }i\in\cS,\label{eq:dRi1}\\
&\hspace{.02cm}
\sum_{i\in\cS}\lambda_{i}\sum_{n\in\Z}\ \sup_{n\eps<x\le (n+1)\eps}|g_{i}(x)|\ <\ \infty\text{ for some }\eps>0,\label{eq:dRi2}
\end{align}
where $g_{i}:=g(i,\cdot)$. If $\cS$ is finite, then this reduces to the statement that $g_{i}$ for each $i\in\cS$ is directly Riemann integrable in the ordinary sense and the following result reduces to Theorem 4 in both, \cite{Saporta:03} and \cite{Sgibnev:06}, for the general nonarithmetic case. For the spread-out case see also \cite[Theorem 3]{Sgibnev:01} and \cite[Theorem 6]{Sgibnev:06}.

\begin{Theorem}\label{MRT2 for Q}
Under the same assumptions as in Theorem \ref{MRT for Q}, let $g$ be $\sfu$-directly Riemann integrable. Then $\V*g=(\V_{i}*g(t))_{i\in\cS}$ has bounded components, i.e.
\begin{equation*}
\sup_{t\in\R}|\V_{i}*g(t)|\ <\ \infty\quad\text{for all }i\in\cS,
\end{equation*}
and, furthermore,
\begin{equation*}
\lim_{t\to\infty}(\V*g)_{i}(t)\ =\ \frac{\sfv_{i}}{\mu}\sum_{j\in\cS}\sfu_{j}\int g_{j}(x)\ dx\quad\text{and}\quad\lim_{t\to-\infty}(\V*g)_{i}(t)\ =\ 0
\end{equation*}
for all $i\in\cS$. If $Q\otimes F$ is even spread out, then the assertions remain valid for all functions $g$ satisfying
\begin{align}
&g_{i}\in L^{\infty}(\llam)\text{ and }\lim_{|x|\to\infty}g_{i}(x)=0\ \text{ for all }i\in\cS,\label{eq:dRi3} \\
&\hspace{2cm}\sum_{i\in\cS}\sfu_{i}\,\|g_{i}\|_{\infty}\ <\ \infty,\label{eq:dRi4}\\
&\hspace{.6cm}g\in L^{1}(\sfu\otimes\llam),\text{ i.e. }\sum_{i\in\cS}\sfu_{i}\|g_{i}\|_{1}\ <\ \infty.\label{eq:dRi5}
\end{align}
\end{Theorem}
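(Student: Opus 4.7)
The plan is to reduce everything to the \emph{ordinary} (stochastic) Markov renewal setting via the harmonic transform hinted at in the introduction, then invoke the standard Markov renewal theorem for a recurrent driving chain (which the paper develops in Section \ref{sec:DMRT}).

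\textbf{Step 1: Harmonic transform.} Set $p_{\ij}:=q_{\ij}\,\sfv_j/\sfv_i$. Quasi-stochasticity of $Q$ together with $Q\sfv=\sfv$ makes $P=(p_{\ij})$ a bona fide stochastic matrix on $\cS$, with stationary distribution $\pi_j=\sfu_j\sfv_j$ (using $\sfu^\top Q=\sfu^\top$ and the normalization $\sfu^\top\sfv=1$). An easy induction yields $q_{\ij}^{(n)}=(\sfv_i/\sfv_j)\,p_{\ij}^{(n)}$ for all $n$, hence
\begin{equation*}
(Q\otimes F)^{*n}_{\ij}(t)\ =\ \frac{\sfv_i}{\sfv_j}\,(P\otimes F)^{*n}_{\ij}(t)
\end{equation*}
for every $t\in\R$, $i,j\in\cS$. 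Writing $\U=(\U_{\ij})$ for the matrix renewal measure of $P\otimes F$, we therefore obtain $\V_{\ij}=(\sfv_i/\sfv_j)\,\U_{\ij}$.

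\textbf{Step 2: Transform the convolution.} Define $\wt g:\cS\times\R\to\R$ by $\wt g_j(x):=g_j(x)/\sfv_j$. From Step 1,
\begin{equation*}
(\V*g)_i(t)\ =\ \sum_{j\in\cS}\int g_j(t-x)\,\V_{\ij}(dx)\ =\ \sfv_i\,(\U*\wt g)_i(t).
\end{equation*}
Thus the assertion will follow at once if we can apply the ordinary Markov renewal theorem for $(M_n,S_n)$ associated with $P\otimes F$ to the function $\wt g$. Observe that condition \eqref{eq:positive drift} transports to the stationary drift of this Markov random walk, giving exactly $\mu$ (Lemma \ref{lem:properties MRW}), and that \eqref{eq:recurrence of Q} says the chain $(M_n)$ is recurrent.

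\textbf{Step 3: Check $\pi$-direct Riemann integrability of $\wt g$.} A direct computation gives
\begin{equation*}
\sum_{i\in\cS}\pi_i\sum_{n\in\Z}\sup_{n\eps<x\le(n+1)\eps}|\wt g_i(x)|
\ =\ \sum_{i\in\cS}\sfu_i\sum_{n\in\Z}\sup_{n\eps<x\le(n+1)\eps}|g_i(x)|,
\end{equation*}
which is finite by \eqref{eq:dRi2}, while the a.e.-continuity condition \eqref{eq:dRi1} is preserved under multiplication by the constant $1/\sfv_i$. Thus $\wt g$ satisfies the ordinary ($\pi$-)direct Riemann integrability condition for the stochastic setting.

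\textbf{Step 4: Apply the ordinary Markov renewal theorem and transform back.} The Markov renewal theorem of Section \ref{sec:DMRT} yields
\begin{equation*}
\lim_{t\to\infty}(\U*\wt g)_i(t)\ =\ \frac{1}{\mu}\sum_{j\in\cS}\pi_j\int\wt g_j(x)\,dx\ =\ \frac{1}{\mu}\sum_{j\in\cS}\sfu_j\int g_j(x)\,dx
\end{equation*}
(using $\pi_j/\sfv_j=\sfu_j$), and multiplication by $\sfv_i$ gives the stated limit. Boundedness of $(\V*g)_i$ follows from the corresponding bound for $(\U*\wt g)_i$, which in the Stone-type framework of Section \ref{sec:DMRT} is a routine consequence of the $\pi$-direct Riemann integrability. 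The limit $t\to-\infty$ is obtained analogously by applying the one-sided Markov renewal theorem (based on the ladder structure of $(S_n)$, which has positive drift).

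\textbf{Step 5: Spread-out case.} If $Q\otimes F$ is spread out, then so is $P\otimes F$. The Stone-type decomposition $\U=\U^1+\U^2$ for the ordinary Markov renewal measure (cf.\ Theorem \ref{Stone-type & MRT for Q}, whose analogue for $P$ is classical) allows the d.R.i.\ hypothesis on $\wt g$ to be relaxed to the conditions \eqref{eq:dRi3}--\eqref{eq:dRi5}: indeed, those three conditions on $g$ translate verbatim into the corresponding conditions on $\wt g$ relative to $\pi$, since $\sfu_i\|g_i\|_\infty=\pi_i\|\wt g_i\|_\infty$ and $\sfu_i\|g_i\|_1=\pi_i\|\wt g_i\|_1$.

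The main obstacle is none of the individual steps but rather making sure that the underlying Markov renewal theorem for $P\otimes F$, to be established in Section \ref{sec:DMRT}, is formulated in sufficient generality to cover both (i) a countably infinite recurrent (possibly null-recurrent) driving chain, where the classical proofs via regeneration at an atom must be adapted through occupation measures, and (ii) the broader spread-out class \eqref{eq:dRi3}--\eqref{eq:dRi5}. Once that is in place, the harmonic transform handles the rest essentially by relabeling.
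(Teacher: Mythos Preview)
Your proposal is correct and follows essentially the same route as the paper: the paper's proof also passes to $\widehat g_j=g_j/\sfv_j$, notes that $\sfu$-direct Riemann integrability of $g$ is equivalent to $\pi$-direct Riemann integrability of $\widehat g$ (and likewise for the spread-out conditions), rewrites $\V*g=D\,\U*\widehat g$ via Lemma~\ref{lem:MRM connection}, and then invokes Theorems~\ref{thm:MRT2} and~\ref{thm:MRT3}. The only cosmetic difference is that the paper packages Step~1 as the matrix identity $\V=D\,\U\,D^{-1}$ rather than writing out the entrywise relation, and it reads off boundedness and both limits $t\to\pm\infty$ directly from those theorems rather than appealing separately to ladder structure.
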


Turning finally to the Markov renewal equation $Z=z+(Q\otimes F)*Z$, it is now relatively easy to provide conditions such that $Z^{*}=\V*z$ forms a solution. On the other hand, the question of uniqueness of $Z^{*}$ within a reasonable class of functions appears to be  more difficult, especially if the state space $\cS$ of the driving chain is infinite. Conditions that guarantee uniqueness are often hard to verify in concrete applications.

\vspace{.1cm}
Given any $Z:\cS\times\R\to\R$, let $\wh{Z}:=D^{-1}Z=(\sfv_{i}^{-1}Z_{i})_{i\in\cS}$. Then define
\begin{align*}
\cL\ :=\ &\{Z:\ \|\wh Z_{i}\|_{\infty}<\infty\text{ and }\lim_{t\to-\infty}\wh Z_{i}(t)=0\text{ for all }i\in\cS\},\\
\cL_{0}\ :=\ &\{Z\in\cL:\ \sup_{i\in\cS}\|\wh Z_{i}\|_{\infty}<\infty\},\\
\cL_{0}(g)\ :=\ &\{Z:\ \wh{Z}-\wh{g}\in\cL_{0}\},\\
\cC_{b}\ :=\ &\{Z:\ \sup_{i\in\cS}\|\wh Z_{i}\|_{\infty}<\infty\text{ and }Z_{i}\text{ is continuous for all }i\in\cS\}.
\end{align*}
Note that $\cL=\cL_{0}$ if $\cS$ is finite.

\begin{Theorem}\label{MRE}
Let $Q$ be a quasi-stochastic matrix satisfying \eqref{eq:recurrence of Q} and \eqref{eq:positive drift} and suppose that $Q\otimes F$ is nonarithmetic. Let further $z:\cS\times\R\to\R$ be $\sfu$-directly Riemann integrable, or satisfy conditions \eqref{eq:dRi3}--\eqref{eq:dRi5} if $Q\otimes F$ is even spread out. Then $Z^{*}=\V*z$ is an element of $\cL$ and the unique solution to $Z=z+(Q\otimes F)*Z$ in $\cL_{0}(Z^{*})$. It is also the unique solution in the larger class $\cL_{0}$ if $\cS$ is finite or, more generally, $Z^{*}\in\cL_{0}$.
\end{Theorem}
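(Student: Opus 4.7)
The plan is to reduce the matrix renewal equation to a genuine Markov renewal equation via a harmonic transform and then invoke Theorem~\ref{MRT2 for Q} for existence and a standard probabilistic argument for uniqueness. Let $D$ denote the diagonal matrix with entries $\sfv_{i}$ and set $P:=D^{-1}QD$, i.e.\ $p_{\ij}=q_{\ij}\sfv_{j}/\sfv_{i}$; the identity $Q\sfv=\sfv$ makes $P$ a stochastic matrix with stationary measure $(\sfu_{i}\sfv_{i})_{i\in\cS}$, recurrent by \eqref{eq:recurrence of Q} (note that $p_{ii}^{(n)}=q_{ii}^{(n)}$). Setting $\wh{Z}=D^{-1}Z$ and $\wh{z}=D^{-1}z$, the equation $Z=z+(Q\otimes F)*Z$ is equivalent to
$$
\wh{Z}\ =\ \wh{z}+(P\otimes F)*\wh{Z},
$$
the standard discrete Markov renewal equation associated with the Markov random walk $(M_{n},S_{n})_{n\ge0}$ with transition matrix $P$ and conditional jump laws $F_{\ij}$; by \eqref{eq:positive drift} this walk has positive stationary drift $\mu$.

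For existence, Theorem~\ref{MRT2 for Q} (or, in the spread-out case, its second part) immediately gives that each component of $Z^{*}:=\V*z$ is bounded and vanishes as $t\to-\infty$, whence $Z^{*}\in\cL$. That $Z^{*}$ solves the equation is obtained from the defining relation $\V=I+(Q\otimes F)*\V$ upon convolving with $z$.

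For uniqueness in $\cL_{0}(Z^{*})$, let $Y$ be the difference of two such solutions. By linearity $\wh{Y}\in\cL_{0}$, and $Y$ satisfies the homogeneous equation $Y=(Q\otimes F)*Y$, whose iteration yields $Y=(Q\otimes F)^{*n}*Y$ for every $n\in\N_{0}$. In Markov random walk language this translates to
$$
\wh{Y}_{i}(t)\ =\ \Erw_{i}\!\left[\wh{Y}_{M_{n}}(t-S_{n})\right],\quad i\in\cS,\ t\in\R,\ n\in\N_{0}.
$$
Since $\mu>0$, one has $S_{n}\to\infty$ almost surely. Combined with $\wh{Y}_{j}(s)\to 0$ as $s\to-\infty$ for every $j$ and $\sup_{j}\|\wh{Y}_{j}\|_{\infty}<\infty$, a regeneration argument at successive returns of $(M_{n})$ to a fixed state $i_{0}\in\cS$ reduces the matter to the classical Choquet-Deny lemma for ordinary random walks with positive drift and forces $\wh{Y}_{M_{n}}(t-S_{n})\to 0$ almost surely. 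Bounded convergence then yields $\wh{Y}\equiv 0$. Finally, whenever $\cS$ is finite or $Z^{*}\in\cL_{0}$, one readily verifies $\cL_{0}(Z^{*})=\cL_{0}$, so uniqueness in the larger class $\cL_{0}$ follows from the above.

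The main technical hurdle is the almost sure convergence $\wh{Y}_{M_{n}}(t-S_{n})\to 0$ in the case of infinite $\cS$, because $(M_{n})$ may traverse arbitrarily many states while the membership $\wh{Y}\in\cL_{0}$ only provides pointwise-in-$j$ vanishing at $-\infty$ without a uniform rate. The reduction to an ordinary random walk at first-return times to a recurrent reference state $i_{0}$, which is the heart of the Markov renewal approach developed in Section~\ref{sec:DMRT}, is precisely what makes this step tractable.
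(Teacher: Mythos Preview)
Your approach is essentially the same as the paper's: harmonic transform to pass from $Q$ to $P$, invoke Theorem~\ref{MRT2 for Q} for $Z^{*}\in\cL$, take the difference of two solutions, and kill it by looking at the MRW along return times to a fixed reference state. One point worth sharpening: the paper does not invoke Choquet--Deny here at all (that is reserved for Theorem~\ref{Choquet-Deny}); instead it observes that $(\wh{Y}_{M_{n}}(t-S_{n}))_{n\ge0}$ is a bounded $\Prob_{i}$-martingale, hence converges a.s., and then identifies the limit as $0$ via the subsequence $n=\sigma_{k}(i)$ (where $M_{n}=i$ is fixed and $S_{\sigma_{k}(i)}\to\infty$, so $\wh{Y}_{i}(t-S_{\sigma_{k}(i)})\to0$ by the $\cL_{0}$-condition). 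Your phrasing ``reduces the matter to the classical Choquet--Deny lemma'' is slightly off target and also leaves open why the \emph{full} sequence (not just the return-time subsequence) tends to $0$; the martingale convergence theorem is exactly what closes that gap, and with it bounded convergence gives $\wh{Y}_{i}(t)=\Erw_{i}[\lim_{n}\wh{Y}_{M_{n}}(t-S_{n})]=0$ directly.
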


Note that within the class of component-wise bounded functions there are in fact infinitely many solutions to $Z=z+(Q\otimes F)*Z$, namely all functions 
$$ Z^{c}(t)\ :=\ \V*z(t)+c\sfv\ =\ \big(\V_{i}*z(t)+c\sfv_{i}\big)_{i\in\cS}\quad (t\in\R) $$ 
for $c\in\R$. This means that the constant vectors $c\sfv=(c\sfv_{i})_{i\in\cS}$ are solutions to the homogeneous (Choquet-Deny type) equation $Z=(Q\otimes F)*Z$. The following theorem further shows that they are in fact the only ones within the class $\cC_{b}$. If $\cS$ is finite, this was established analytically by de Saporta \cite[Subsection 3.2]{Saporta:03} extending earlier results by Crump \cite{Crump:70} and Athreya and Rama Murthy \cite{AthreyaRama:76} in the one-sided case when all $z_{i},Z_{i}$ and/or $F_{\ij}$ are concentrated on $[0,\infty)$. Not necessarily continuous solutions in the one-sided case are also discussed in some detail by Cinlar \cite[Sections 3 and 4]{Cinlar:69} in his survey of Markov renewal theory. For yet another and quite recent extension of these results see \cite{Sgibnev:10}. Here we give a simple probabilistic argument which essentially reduces the problem to the classical renewal setup where the answer is known (see \cite[p.\ 382]{Feller:71}).

\begin{Theorem}\label{Choquet-Deny}
Let $Q$ be a quasi-stochastic matrix satisfying \eqref{eq:recurrence of Q} and \eqref{eq:positive drift} and suppose that $Q\otimes F$ is nonarithmetic. Then any solution $Z\in\cC_{b}$ to the equation $Z=(Q\otimes F)*Z$ equals $c\sfv$ for some $c\in\R$.
\end{Theorem}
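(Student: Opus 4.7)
The plan is to carry out the harmonic transform that is the leitmotif of the paper and then reduce the problem to the classical Choquet--Deny lemma for an ordinary random walk via cycle regeneration. Writing $\wh Z_i:=Z_i/\sfv_i$ and $p_{\ij}:=q_{\ij}\sfv_j/\sfv_i$, the identity $Q\sfv=\sfv$ shows that $P=(p_{\ij})$ is stochastic, and dividing the equation $Z_i(t)=\sum_j q_{\ij}\int Z_j(t-x)\,F_{\ij}(dx)$ through by $\sfv_i$ yields the transformed identity
\begin{equation*}
\wh Z_i(t)\ =\ \sum_{j\in\cS}p_{\ij}\int_{\R}\wh Z_j(t-x)\,F_{\ij}(dx)\quad (t\in\R,\ i\in\cS).
\end{equation*}
Membership of $Z$ in $\cC_{b}$ translates to $c_{\max}:=\sup_{i\in\cS}\|\wh Z_i\|_{\infty}<\infty$ together with continuity of each $\wh Z_i$, so it suffices to prove $\wh Z\equiv c$ for some $c\in\R$.

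Next I would invoke the Markov random walk $(M_n,S_n)_{n\ge 0}$ associated with $P\otimes F$ as constructed in Section \ref{sec:DMRT}: on the canonical space $(\Omega,\cF,\Prob_i)$, $(M_n)$ is a Markov chain with transition matrix $P$ started from $i$, and conditional on $(M_n)$ the increments $X_n=S_n-S_{n-1}$ are independent with $X_n\sim F_{M_{n-1},M_n}$. A one-step calculation using the transformed equation shows that $(\wh Z_{M_n}(t-S_n))_{n\ge 0}$ is a $\Prob_i$-martingale, uniformly bounded by $c_{\max}$, with
\begin{equation*}
\wh Z_i(t)\ =\ \Erw_i\bigl[\wh Z_{M_n}(t-S_n)\bigr]\quad (n\ge 0,\ t\in\R).
\end{equation*}
Fix a reference state $i_0\in\cS$ and let $T$ denote the first return time of $(M_n)$ to $i_0$. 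Since \eqref{eq:recurrence of Q} renders $P$ recurrent, $T<\infty$ $\Prob_{i_0}$-a.s., and optional sampling for bounded martingales delivers
\begin{equation*}
\wh Z_{i_0}(t)\ =\ \Erw_{i_0}\bigl[\wh Z_{i_0}(t-S_T)\bigr]\quad (t\in\R).
\end{equation*}

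Hence $\wh Z_{i_0}$ is a bounded continuous function on $\R$ harmonic with respect to the law $G:=\Prob_{i_0}(S_T\in\cdot)$, and provided $G$ is nonarithmetic, Feller's classical Choquet--Deny lemma \cite[p.\,382]{Feller:71} forces $\wh Z_{i_0}\equiv c$ for some $c\in\R$. To propagate this conclusion to the remaining states, apply the same martingale stopped at the first hitting time $\tau:=\inf\{n\ge 0:M_n=i_0\}$ from an arbitrary starting state $i\in\cS$; irreducibility together with recurrence gives $\tau<\infty$ $\Prob_i$-a.s., so
$\wh Z_i(t)=\Erw_i[\wh Z_{i_0}(t-S_\tau)]=c$,
and therefore $Z=c\sfv$, as desired.

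The one non-routine step is transferring nonarithmeticity of $Q\otimes F$ to nonarithmeticity of the cycle law $G$, since Shurenkov's condition \eqref{eq:lattice condition} is phrased via a shift function $\gamma$ rather than cycle sums. The argument is contrapositive: if $S_T$ were concentrated on $b\Z$ for some $b>0$, then for each $j\in\cS$ the strong Markov property at the first hitting time $\tau_j$ of $j$ from $i_0$, combined with the independent first-return-to-$i_0$-from-$j$ cycle (whose convolution with $S_{\tau_j}$ must live in $b\Z$), forces $S_{\tau_j}$ to be supported in a single coset of $b\Z$, whose representative in $[0,b)$ defines $\gamma(j)$. Decomposing an $i_0$-excursion that passes through the edge $(i,j)$ then shows that $F_{\ij}$ is supported in $\gamma(j)-\gamma(i)+b\Z$, producing $b$-arithmeticity of $Q\otimes F$. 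This step relies only on the regeneration machinery of Section \ref{sec:DMRT} and I do not anticipate genuine difficulty, but it is where the lattice hypothesis is actually consumed; everything else is standard martingale calculus and classical renewal theory.
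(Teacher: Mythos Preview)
Your proof is correct and follows essentially the same route as the paper: harmonic transform to $\wh Z$, the bounded martingale $(\wh Z_{M_n}(t-S_n))_{n\ge 0}$, optional sampling at return times to reduce to the classical Choquet--Deny equation, and then Feller's result. The only organizational difference is that the paper applies the cycle argument at \emph{every} state $i$ to obtain $\wh Z_i\equiv c_i$ and afterwards matches the constants via optional sampling at a hitting time, whereas you fix one reference state $i_0$, pin down $c$ there, and then propagate; this is the same argument rearranged. Your final paragraph supplies a contrapositive sketch for the nonarithmeticity of the cycle law, which the paper simply imports as Lemma \ref{lem:lattice-type} (stated without proof), so in that respect you are slightly more explicit.
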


\section{The Markov renewal setup}\label{sec:setup}

Put $D:=\text{diag}(\sfv_{i},i\in\cS)$ and $\pi=(\pi_{i})_{i\in\cS}$ with $\pi_{i}:=\sfu_{i}\sfv_{i}$ for $i\in\cS$. By \eqref{Perron eigenvector normalized}, $\pi$ defines a probability distribution on $\cS$ if both, the $\sfu_{i}$ and $\sfu_{i}\sfv_{i}$ are summable. Put further
$$ P\ :=\ D^{-1}QD\ =\ \left(\frac{q_{\ij}\sfv_{j}}{\sfv_{i}}\right)_{i,j\in\cS} $$
which forms an irreducible \emph{stochastic matrix} having essentially unique left eigenvector $\pi=\sfu^{\top}D=(\sfu_{i}\sfv_{i})_{i\in\cS}$ associated with its maximal eigenvalue 1. Then
\begin{equation}\label{eq:PotimesF(t)}
\Lambda(t)\ :=\ P\otimes F(t)\ =\ D^{-1}(Q\otimes F)(t)D\ =\ \left(\frac{q_{\ij}F_{\ij}(t)\sfv_{j}}{\sfv_{i}}\right)_{i,j\in\cS}
\end{equation}
defines a matrix transition function of a Markov modulated sequence $(M_{n},X_{n})_{n\ge 0}$ with state space $\cS\times\R$. This means that the latter sequence forms a temporally homogeneous Markov chain satisfying
$$ \Prob(M_{n+1}=j,X_{n+1}\le t|M_{n}=i)\ =\ p_{\ij}F_{\ij}(t) $$
for all $n\in\N_{0}$, $i,j\in\cS$ and $t\in\R$. Equivalently, $M=(M_{n})_{n\ge 0}$ forms a Markov chain on $\cS$ with transition matrix $P$ and the $X_{n}$ are conditionally independent given $M$ with
$$ \Prob(X_{n}\le t|M)\ =\ \Prob(X_{n}\le t|M_{n-1},M_{n})\ =\ F_{M_{n-1}M_{n}}(t) $$
for all $n\in\N$ and $t\in\R$. The Markov-additive process associated with $(M_{n},X_{n})_{n\ge 0}$, called Markov random walk (MRW) hereafter, is defined as $(M_{n},S_{n})_{n\ge 0}$, where $S_{n}=X_{0}+...+X_{n}$ for $n\in\N_{0}$. Its occupation measure on $\cS\times\R$ under $\Prob_{i}:=\Prob(\cdot|M_{0}=i)$, called Markov renewal measure, is given by
\begin{equation}\label{eq:def MRM}
\U_{i}(C)\ :=\ \Erw_{i}\left(\sum_{n\ge 0}\1_{C}(M_{n},S_{n})\right)\ =\ \sum_{n\ge 0}\Prob_{i}((M_{n},S_{n})\in C)
\end{equation}
for measurable subsets $C$ of $\cS\times\R$. Since $\cS$ is countable, there is a one-to-one correspondence between the vector measure $(\U_{i})_{i\in\cS}$ and the matrix renewal measure $\U=(\U_{\ij})_{i,j\in\cS}$, where
\begin{equation*}
\U_{\ij}(B)\ :=\ \Erw_{i}\left(\sum_{n\ge 0}\1_{\{M_{n}=j,S_{n}\in B\}}\right)\ =\ \sum_{n\ge 0}\Prob_{i}(M_{n}=j,S_{n}\in B)\quad (B\in\cB(\R)).
\end{equation*}

\begin{Lemma}\label{lem:properties MRW}
Let $Q$ be a quasi-stochastic matrix satisfying \eqref{eq:recurrence of Q} and \eqref{eq:positive drift}. Then the associated MRW $(M_{n},S_{n})_{n\ge 0}$ has recurrent driving chain with stationary measure $\pi$ and positive stationary drift $\mu$ defined in \eqref{eq:positive drift}, thus $\Erw_{\pi}X_{1}=\mu$.
\end{Lemma}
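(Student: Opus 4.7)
The plan is to exploit the similarity relation $P = D^{-1}QD$, which makes the three assertions (recurrence, stationarity of $\pi$, identification of the drift) essentially one-line computations.

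First I would verify recurrence of $M$. Since $P^{n}=D^{-1}Q^{n}D$ (because $D^{-1}QD \cdot D^{-1}QD = D^{-1}Q^{2}D$, and induction), the diagonal entries satisfy
\begin{equation*}
p_{ii}^{(n)}\ =\ \frac{q_{ii}^{(n)}\sfv_{i}}{\sfv_{i}}\ =\ q_{ii}^{(n)}
\end{equation*}
for every $i\in\cS$ and $n\ge 0$. Hence assumption \eqref{eq:recurrence of Q} gives $\sum_{n\ge 1}p_{ii}^{(n)}=\infty$ for some $i\in\cS$. Since $P$ is irreducible (being similar, via a strictly positive diagonal matrix, to the irreducible $Q$), the standard solidarity for stochastic matrices on a countable state space implies that the Markov chain $M$ with transition matrix $P$ is recurrent.

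Second, I would check that $\pi_{i}=\sfu_{i}\sfv_{i}$ defines a stationary measure for $P$. Using $\sfu^{\top}Q=\sfu^{\top}$ from \eqref{eigenvalue one}, for every $j\in\cS$
\begin{equation*}
(\pi^{\top}P)_{j}\ =\ \sum_{i\in\cS}\sfu_{i}\sfv_{i}\cdot\frac{q_{\ij}\sfv_{j}}{\sfv_{i}}\ =\ \sfv_{j}\sum_{i\in\cS}\sfu_{i}q_{\ij}\ =\ \sfu_{j}\sfv_{j}\ =\ \pi_{j},
\end{equation*}
so $\pi$ is $P$-invariant, and by recurrence plus irreducibility it is the essentially unique stationary measure.

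Third, the stationary drift is a direct computation using the conditional distribution $\Prob(X_{1}\in\cdot\,|\,M_{0}=i,M_{1}=j)=F_{\ij}$ and the definition of $\pi$:
\begin{equation*}
\Erw_{\pi}X_{1}\ =\ \sum_{i\in\cS}\pi_{i}\sum_{j\in\cS}p_{\ij}\int x\ F_{\ij}(dx)\ =\ \sum_{i,j\in\cS}\sfu_{i}q_{\ij}\sfv_{j}\int x\ F_{\ij}(dx)\ =\ \mu,
\end{equation*}
which is positive by \eqref{eq:positive drift}. The only mild obstacle I anticipate is justifying the absolute convergence of this double sum when $\cS$ is infinite, so that Fubini applies and the expression $\Erw_{\pi}X_{1}$ is well defined; but \eqref{eq:positive drift} is phrased as an iterated sum that is assumed to converge to a positive real number, which implicitly carries the required integrability, and no further argument is needed.
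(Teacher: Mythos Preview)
Your proof is correct and follows essentially the same approach as the paper: the recurrence argument via $p_{ii}^{(n)}=q_{ii}^{(n)}$ and the drift computation by substituting $\pi_{i}p_{\ij}=\sfu_{i}q_{\ij}\sfv_{j}$ are exactly what the paper does. Your explicit verification that $\pi$ is $P$-invariant is a small addition---the paper states this fact in the setup preceding the lemma rather than inside the proof.
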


\begin{proof}
Obviously, \eqref{eq:recurrence of Q} is equivalent to
$$ \sum_{n\ge 1}p_{ii}^{(n)}\ =\ \infty\quad\text{for some }i\in\cS $$
which in turn is equivalent to the recurrence of $(M_{n})_{n\ge 0}$ as claimed. The drift assertion follows from
\begin{align*}
\Erw_{\pi}X_{1}\ &=\ \sum_{i,j\in\cS}\Prob_{\pi}(M_{0}=i,M_{1}=j)\,\Erw(X_{1}|M_{0}=i,M_{1}=j)\\
&=\ \sum_{i,j\in\cS}\pi_{i}p_{\ij}\,\int x\ F_{\ij}(dx)
\end{align*}
in combination with the definitions of the $\pi_{i}$ and $p_{\ij}$.\qed
\end{proof}

\begin{Lemma}\label{lem:MRM connection}
Let $Q$ be a quasi-stochastic matrix satisfying \eqref{eq:recurrence of Q} and \eqref{eq:positive drift}. Then 
\begin{equation}\label{eq:V<->U}
\V\ =\ D\,\U\,D^{-1}\ =\ \left(\frac{\sfv_{i}\,\U_{\ij}}{\sfv_{j}}\right)_{i,j\in\cS}
\end{equation}
\end{Lemma}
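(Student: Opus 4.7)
The plan is to conjugate everything by the diagonal matrix $D$ and reduce the identity to the one-step relation $\Lambda=D^{-1}(Q\otimes F)D$ already recorded in \eqref{eq:PotimesF(t)}. Everything after that should follow mechanically.

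First, I would establish by induction on $n$ that
$$ \Lambda^{*n}(t)\ =\ D^{-1}(Q\otimes F)^{*n}(t)\,D\qquad(t\in\R,\ n\in\N_{0}). $$
The case $n=0$ is immediate since both sides reduce to the identity matrix times $\1_{\{t\ge 0\}}$. For the inductive step, the diagonal matrix $D$ is constant and passes through the matrix convolution: at the componentwise level, the $\sfv_{k}$ factor introduced by $D$ on the right of $\Lambda^{*n}$ cancels the $\sfv_{k}^{-1}$ factor introduced by $D^{-1}$ on the left of $\Lambda$, yielding
$$ \Lambda^{*(n+1)}(t)\ =\ \int\Lambda^{*n}(t-x)\,\Lambda(dx)\ =\ D^{-1}(Q\otimes F)^{*(n+1)}(t)\,D. $$

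Second, I would identify $\sum_{n\ge 0}\Lambda^{*n}(t)$ with the Markov renewal measure $\U$ defined in \eqref{eq:def MRM}. This is the standard semi-Markov observation: a short induction using the Markov property of $(M_{n},X_{n})_{n\ge 0}$ and its one-step transition law $\Lambda$ shows that
$$ \Lambda^{*n}_{\ij}(t)\ =\ \Prob_{i}(M_{n}=j,\,S_{n}\le t)\qquad(i,j\in\cS,\ t\in\R,\ n\in\N_{0}), $$
so that $\U_{\ij}((-\infty,t])=\sum_{n\ge 0}\Lambda^{*n}_{\ij}(t)$ by definition.

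Summing the identity of the first step over $n\ge 0$ and combining it with the second step then gives $\U(t)=D^{-1}\V(t)D$ as matrix measures on $\R$, and a left-multiplication by $D$ together with a right-multiplication by $D^{-1}$ produces \eqref{eq:V<->U}. I do not anticipate any serious obstacle here. The only point meriting brief verification is finiteness of all entries appearing in the induction, but this is already guaranteed by the remark following \eqref{Perron eigenvector normalized} that all powers $Q^{n}$, and hence all convolution powers $(Q\otimes F)^{*n}$, have finite entries; in particular the possibly countably infinite-dimensional conjugation by $D$ introduces no convergence issues.
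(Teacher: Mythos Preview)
Your proof is correct and follows essentially the same route as the paper: both arguments rest on iterating the conjugation relation \eqref{eq:PotimesF(t)} to obtain $\Lambda^{*n}=D^{-1}(Q\otimes F)^{*n}D$, identifying $\Lambda^{*n}_{\ij}(t)$ with $\Prob_{i}(M_{n}=j,S_{n}\le t)$, and summing over $n$. The paper merely compresses the induction into a single display, working with increments $(t,t+h]$ rather than distribution functions, but the substance is identical.
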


\begin{proof}
For all $i,j\in\cS$, $t\in\R$ and $h>0$, we have that
\begin{align*}
\U_{\ij}((t,t+h])\ &=\ \sum_{n\ge 0}\Prob_{i}(M_{n}=j,S_{n}\in (t,t+h])\\
&=\ \sum_{n\ge 0}p_{\ij}^{(n)}\big(F_{\ij}^{*n}(t+h)-F_{\ij}^{*n}(t)\big)
\end{align*}
and therefore, using \eqref{eq:PotimesF(t)},
\begin{align*}
\U((t,t+h])\ &=\ \sum_{n\ge 0}\big((P\otimes F)^{*n}(t+h)-(P\otimes F)^{*n}(t)\big)\\
&=\ \sum_{n\ge 0}D^{-1}\big((Q\otimes F)^{*n}(t+h)-(Q\otimes F)^{*n}(t)\big)D\\
&=D^{-1}\left(\sum_{n\ge 0}\big((Q\otimes F)^{*n}(t+h)-(Q\otimes F)^{*n}(t)\big)\right)D\\
&=\ D^{-1}\V((t,t+h])\,D.
\end{align*}
This proves the assertion.\qed
\end{proof}

Eq.\ \eqref{eq:V<->U} provides the crucial relation between the renewal measure $\V$ associated with $Q\otimes F$ and the matrix renewal measure $\U$ whose entries $\U_{\ij}$ are actually ordinary renewal measures as will be shown in Lemma \ref{lem:MRM to ordinary RM}. As a consequence, any result valid for $\U$ is now easily converted into a result for $\V$.

\section{Discrete Markov renewal theory: a purely probabilistic approach}\label{sec:DMRT}

Throughout this section, let $(M_{n},S_{n})_{n\ge 0}$ be an arbitrary nonarithmetic MRW with discrete recurrent driving chain $M=(M_{n})_{n\ge 0}$ having state space $\cS$, transition matrix $P=(p_{\ij})_{i,j\in\cS}$ and stationary measure $\pi=(\pi_{i})_{i\in\cS}$, the latter being unique up to positive scalars. We denote by $X_{1},X_{2},...$ the increments of $(S_{n})_{n\ge 0}$ and by $F_{\ij}$ the conditional distribution of $X_{n}$ given $M_{n-1}=i$ and $M_{n}=j$ for $i,j\in\cS$. Put $\Prob_{i}:=\Prob(\cdot|M_{0}=i)$ with expectation operator $\Erw_{i}$ and let $S_{0}=0$ a.s.\ under $\Prob_{i}$ for each $i\in\cS$. Finally assume that the MRW has positive stationary drift $\mu$, given by
$$ \mu\ =\ \sum_{i\in\cS}\sum_{j\in\cS}\pi_{i}p_{\ij}\mu_{\ij}\ =\ \Erw_{\pi}X_{1}, $$
where $\mu_{\ij}:=\int x\,F_{\ij}(dx)$. Notice that $\mu$, as $\pi$, is only unique up to positive scalars.

\subsection{Auxiliary lemmata}\label{subsec:3.1}

Let $i\in\cS$ be arbitrary but fixed throughout this subsection. Then, we may define $\pi$ as
\begin{equation}\label{eq:def pi}
\pi_{k}\ :=\ \pi_{k}^{(i)}\ :=\ \Erw_{i}\left(\sum_{n=1}^{\sigma(i)}\1_{\{M_{n}=k\}}\right)\quad (k\in\cS),
\end{equation}
where $\sigma(i)$ denotes the first return time of $M$ to $i$. With this choice, we have $\pi_{i}=1$ and may also easily deduce that
\begin{equation}\label{eq:occupation measure formula}
\Erw_{i}\left(\sum_{n=1}^{\sigma(i)}g(M_{n},X_{n})\right)\ =\ \Erw_{i}\left(\sum_{n=0}^{\sigma(i)-1}g(M_{n},X_{n})\right)\ =\ \Erw_{\pi}g(M_{1},X_{1})
\end{equation}
whenever $\Erw_{\pi}g(M_{1},X_{1})$ exists. Note that $\pi^{(j)}=c_{j}\pi^{(i)}$ for any $j\in\cS$ together with $c_{j}\pi_{j}=c_{j}\pi_{j}^{(i)}=\pi_{j}^{(j)}=1$ implies
$c_{j}=\pi_{j}^{-1}$.

If $(\sigma_{n}(i))_{n\ge 1}$ denotes the renewal sequence of successive return times of $M$ to $i$, thus $\sigma(i)=\sigma_{1}(i)$, then $(S_{\sigma_{n}(i)})_{n\ge 1}$ is an ordinary random walk under any $\Prob_{j}$ with increment distribution $\Prob_{i}(S_{\sigma(i)}\in\cdot)$ and drift
$$ \Erw_{i}S_{\sigma(i)}\ =\ \Erw_{i}\left(\sum_{n=1}^{\sigma(i)}X_{n}\right)\ =\ \Erw_{\pi}X_{1}\ =\ \mu, $$
where \eqref{eq:occupation measure formula} has been utilized. In particular, $(S_{\sigma_{n}(i)})_{n\ge 0}$ with $\sigma_{0}(i):=0$ forms a zero-delayed random walk under $\Prob_{i}$. The drift of any other $(S_{\sigma_{n}(j)})_{n\ge 1}$ in terms of $\mu$ and $\pi$ is given in the next lemma.

\begin{Lemma}\label{lem:Erw S_sigma(j)}
For each $j\in\cS$,
$$ \Erw_{j}S_{\sigma(j)}\ =\ \frac{\mu}{\pi_{j}}. $$
\end{Lemma}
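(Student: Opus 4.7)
The plan is to recycle the occupation-measure identity \eqref{eq:occupation measure formula} by swapping the role of the base state from $i$ to $j$. The derivation of \eqref{eq:occupation measure formula} never uses anything special about $i$: if I set
\[
\pi_{k}^{(j)}\ :=\ \Erw_{j}\Bigg(\sum_{n=1}^{\sigma(j)}\1_{\{M_{n}=k\}}\Bigg)\quad (k\in\cS),
\]
then the same cycle argument produces a stationary measure for $P$ with $\pi_{j}^{(j)}=1$, together with
\[
\Erw_{j}\Bigg(\sum_{n=1}^{\sigma(j)}g(M_{n},X_{n})\Bigg)\ =\ \Erw_{\pi^{(j)}}g(M_{1},X_{1})
\]
for any integrand for which the right-hand side makes sense. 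Specialising $g(M_{1},X_{1})=X_{1}$ immediately gives $\Erw_{j}S_{\sigma(j)}=\Erw_{\pi^{(j)}}X_{1}$.

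The next step is to identify $\pi^{(j)}$ as a rescaling of $\pi=\pi^{(i)}$. Uniqueness (up to positive scalars) of the stationary measure of the recurrent irreducible chain $M$, which is already invoked in the paragraph following \eqref{eq:def pi}, forces $\pi^{(j)}=c_{j}\,\pi^{(i)}$ for some constant $c_{j}>0$. Evaluating at $k=j$ gives $1=\pi_{j}^{(j)}=c_{j}\pi_{j}^{(i)}=c_{j}\pi_{j}$, whence $c_{j}=\pi_{j}^{-1}$. This is exactly the identification that the authors already extracted in the remark $\pi^{(j)}=c_{j}\pi^{(i)}$, $c_{j}=\pi_{j}^{-1}$.

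Combining the two displays yields
\[
\Erw_{j}S_{\sigma(j)}\ =\ \Erw_{\pi^{(j)}}X_{1}\ =\ c_{j}\,\Erw_{\pi^{(i)}}X_{1}\ =\ \frac{\mu}{\pi_{j}},
\]
where the last equality uses $\Erw_{\pi}X_{1}=\mu$ established just before Lemma \ref{lem:Erw S_sigma(j)}. The only thing that needs a brief check is finiteness: the positivity assumption \eqref{eq:positive drift} enters only to guarantee $\mu>0$, while finiteness of $\Erw_{\pi^{(j)}}X_{1}$ is inherited from that of $\Erw_{\pi}X_{1}$ via the scalar relation $\pi^{(j)}=\pi_{j}^{-1}\pi$. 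There is no real obstacle in this proof; the one conceptual point is simply the use of the uniqueness of the stationary measure to reconcile the two different normalisations.
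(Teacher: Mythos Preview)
Your proof is correct and follows exactly the same route as the paper: the authors' two-line argument is precisely $\Erw_{j}S_{\sigma(j)}=\Erw_{\pi^{(j)}}X_{1}=\pi_{j}^{-1}\Erw_{\pi}X_{1}$, which you have simply unpacked in more detail. The identification $\pi^{(j)}=\pi_{j}^{-1}\pi$ via uniqueness of the stationary measure is already stated in the paragraph preceding the lemma, so nothing is missing.
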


\begin{proof}
This follows from
$$ \Erw_{j}S_{\sigma(j)}\ =\ \Erw_{\pi^{(j)}}X_{1}\ =\ \pi_{j}^{-1}\,\Erw_{\pi}X_{1} $$
valid for any $j\in\cS$.\qed
\end{proof}

The following lemma on the lattice-type of the $(S_{\sigma_{n}(j)})_{n\ge 1}$, $j\in\cS$, is stated without proof, which may be accomplished with the help of Fourier transforms.

\begin{Lemma}\label{lem:lattice-type}
Under the stated assumptions, $S_{\sigma(j)}$ is nonarithmetic under $\Prob_{j}$, for any $j\in\cS$.
\end{Lemma}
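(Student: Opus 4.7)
The plan is to prove the contrapositive via a Fourier-transform argument. Suppose $S_{\sigma(j)}$ is $d$-arithmetic under $\Prob_j$ for some $j \in \cS$ and $d > 0$, i.e.\ $\Prob_j(S_{\sigma(j)} \in d\Z) = 1$, and set $s := 2\pi/d$; the goal is to derive Shurenkov-$d$-arithmeticity of $(M_n, S_n)_{n \ge 0}$, contradicting the standing hypothesis.

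First I would introduce, for each $k \in \cS$, the characteristic quantity
\[ v_k := \Erw_k\, e^{isS_{\tau_j}}, \qquad \tau_j := \inf\{n \ge 1 : M_n = j\}, \]
which satisfies $|v_k| \le 1$ and, under the arithmeticity assumption, $v_j = \Erw_j\, e^{isS_{\sigma(j)}} = 1$. Decomposing on the first step of $M$ and applying the strong Markov property then gives
\[ v_k = \sum_{\ell \in \cS} p_{k\ell}\, \wh F_{k\ell}(s)\, v_\ell \qquad (k \in \cS), \]
with $\wh F_{k\ell}(s) := \int e^{isx}\,F_{k\ell}(dx)$.

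The core of the argument is the equality case of the triangle inequality at $k = j$: from $|\wh F_{j\ell}(s)|, |v_\ell| \le 1$ and
\[ 1 = |v_j| \le \sum_\ell p_{j\ell}\, |\wh F_{j\ell}(s)|\, |v_\ell| \le \sum_\ell p_{j\ell} = 1 \]
one reads off $|\wh F_{j\ell}(s)| = |v_\ell| = 1$ for every $\ell$ with $p_{j\ell} > 0$, and moreover all nonzero summands $p_{j\ell}\wh F_{j\ell}(s) v_\ell$ must share a common argument (equal to $\arg(v_j) = 0$). I would then iterate this triangle argument at each such $\ell$ via the equation at $k = \ell$, and, using the irreducibility of $M$ to reach every state from $j$ by a finite directed path of positive $P$-weight, propagate $|v_k| = 1$ to all $k \in \cS$ and $|\wh F_{k\ell}(s)| = 1$ to every edge with $p_{k\ell} > 0$.

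Finally, writing $v_k = e^{-is\gamma(k)}$ with $\gamma(k) \in [0,d)$ (so $\gamma(j) = 0$) and $\wh F_{k\ell}(s) = e^{is b_{k\ell}}$ (so that $F_{k\ell}$ is supported on $b_{k\ell} + d\Z$), the common-argument condition at each $k$ translates into the cocycle identity
\[ b_{k\ell} \equiv \gamma(\ell) - \gamma(k) \pmod{d} \quad\text{for all } k,\ell \in \cS \text{ with } p_{k\ell} > 0, \]
which is exactly Shurenkov's $d$-arithmeticity of $(M_n, S_n)$, producing the required contradiction. The main obstacle I anticipate is the propagation step when $\cS$ is infinite: it requires iterating the equality-case argument along a countable family of directed paths and relies on both the irreducibility of $M$ and the absolute convergence of $\sum_\ell p_{k\ell}\wh F_{k\ell}(s) v_\ell$, which is automatic from $\sum_\ell p_{k\ell} = 1$.
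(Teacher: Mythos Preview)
Your argument is correct and is precisely the Fourier-transform approach the paper alludes to; note that the paper actually omits the proof entirely, stating only that it ``may be accomplished with the help of Fourier transforms.'' Your write-up thus supplies exactly the details the paper leaves out, and the one delicate point---that the first-step decomposition yields $v_k=\sum_\ell p_{k\ell}\wh F_{k\ell}(s)v_\ell$ with $v_j$ (rather than $1$) in the $\ell=j$ slot---is handled by your assumption $\Prob_j(S_{\sigma(j)}\in d\Z)=1$, which forces $v_j=1$.
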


The next lemma confirms that the Markov renewal measure $\U_{i}$ is directly related to the ordinary renewal measures of the $(S_{\sigma_{n}(j)})_{n\ge 1}$, $j\in\cS$, under $\Prob_{i}$.

\begin{Lemma}\label{lem:MRM to ordinary RM}
For all $j\in\cS$, $\U_{i}(\{j\}\times\cdot)=\U_{\ij}$ equals the (ordinary) renewal measure of $(S_{\sigma_{n}(j)})_{n\ge 1}$ under $\Prob_{i}$ if $j\ne i$, and of $(S_{\sigma_{n}(i)})_{n\ge 0}$ under $\Prob_{i}$ if $j=i$.
\end{Lemma}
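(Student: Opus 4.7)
My plan is to prove this identity of measures by a direct pathwise rewriting of the sum defining $\U_{\ij}$, reindexing the terms according to the successive visits of $M$ to state $j$. I would start from
$$ \U_{\ij}(B)\ =\ \Erw_i\left(\sum_{n\ge 0}\1_{\{M_n=j,\,S_n\in B\}}\right)\quad (B\in\cB(\R)) $$
and use the hitting-time sequence $(\sigma_n(j))_{n\ge 1}$ of $j$ by $M$ introduced in Section \ref{subsec:3.1}.

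The key pathwise observation is that by recurrence of $M$ under $\Prob_i$ (which, by irreducibility, is inherited from the corresponding assumption on $Q$), $\sigma_n(j)<\infty$ holds $\Prob_i$-almost surely for every $n\ge 1$. Consequently the set $\{n\ge 1:M_n=j\}$ coincides with $\{\sigma_n(j):n\ge 1\}$ almost surely, and hence
$$ \sum_{n\ge 1}\1_{\{M_n=j,\,S_n\in B\}}\ =\ \sum_{n\ge 1}\1_{\{S_{\sigma_n(j)}\in B\}}\quad \Prob_i\text{-a.s.} $$
Taking $\Erw_i$ on both sides and treating the $n=0$ term separately gives both cases of the lemma: if $j\ne i$, then $M_0=i\ne j$ makes the $n=0$ term vanish, leaving $\U_{\ij}(B)=\sum_{n\ge 1}\Prob_i(S_{\sigma_n(j)}\in B)$, which is the renewal measure of $(S_{\sigma_n(j)})_{n\ge 1}$ under $\Prob_i$; if $j=i$, then $S_0=0$ on $\{M_0=i\}$ makes the $n=0$ term contribute $\1_{\{0\in B\}}=\Prob_i(S_{\sigma_0(i)}\in B)$, with the convention $\sigma_0(i):=0$ already used in Section \ref{subsec:3.1}, yielding $\U_{ii}(B)=\sum_{n\ge 0}\Prob_i(S_{\sigma_n(i)}\in B)$.

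That these sums truly are the renewal measures of (possibly delayed) \emph{ordinary random walks}, as emphasized in the statement, should be noted in passing for subsequent use: an application of the strong Markov property at the stopping times $\sigma_n(j)$ shows that the increments $S_{\sigma_{n+1}(j)}-S_{\sigma_n(j)}$, $n\ge 1$, are i.i.d.\ with the distribution of $S_{\sigma(j)}$ under $\Prob_j$, while under $\Prob_i$ with $i\ne j$ the variable $S_{\sigma_1(j)}$ is independent of these increments and plays the role of the initial delay. The argument is essentially clean bookkeeping once recurrence is invoked; the only point requiring care is the separate handling of the $n=0$ term, which is the genuine source of the asymmetry between the cases $j=i$ and $j\ne i$ in the statement and turns out to be the main (though very mild) obstacle.
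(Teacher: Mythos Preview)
Your proposal is correct and follows essentially the same approach as the paper: both argue by reindexing the sum $\sum_{n\ge 0}\1_{\{M_n=j,\,S_n\in B\}}$ according to the successive visit times $\sigma_n(j)$, with the $n=0$ term accounting for the distinction between $j=i$ and $j\ne i$. The paper's version is terser (it just writes down the resulting identity), while you spell out the recurrence justification and the strong Markov remark; the latter is already recorded in the paper just before Lemma~\ref{lem:Erw S_sigma(j)}.
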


\begin{proof}
The assertion follows directly from
\begin{align*}
\U_{\ij}(B)\ &=\ \Erw_{i}\left(\sum_{n\ge 0}\1_{\{M_{n}=j,S_{n}\in B\}}\right)\ =\ 
\begin{cases}
\Erw_{i}\left(\sum_{n\ge 1}\1_{\{S_{\sigma_{n}(j)}\in B\}}\right),&\text{if }j\ne i,\vspace{.1cm}\\
\Erw_{i}\left(\sum_{n\ge 0}\1_{\{S_{\sigma_{n}(i)}\in B\}}\right),&\text{otherwise}.
\end{cases}
\end{align*}
for all $B\in\cB(\R)$.\qed
\end{proof}

For the next result, we define the pre-$\sigma(i)$ occupation measure
\begin{equation*}
\vec{U}_{i}(C)\ :=\ \Erw_{i}\left(\sum_{n=0}^{\sigma(i)-1}\1_{C}(M_{n},S_{n})\right)
\end{equation*}
for measurable subsets $C$ of $\cS\times\R$. Choosing $C=\{j\}\times\R$, we find that
\begin{equation}\label{eq:V_i(j times R)}
\vec{U}_{i}(\{j\}\times\R)\ =\ \Erw_{i}\left(\sum_{n=0}^{\sigma(i)-1}\1_{\{M_{n}=j\}}\right)\ =\ \pi_{j}
\end{equation}
for all $j\in\cS$.

\begin{Lemma}\label{lem:MRM occupation measure formula}
Under the stated assumptions,
\begin{equation}\label{eq:U*V formula1}
\U_{i}(C)\ =\ \sum_{j\in\cS}\ \iint\1_{C}(j,x+y)\ \vec{U}_{i}(\{j\}\times dy)\ \U_{\,ii}(dx)
\end{equation}
for any measurable $C\subset\cS\times\R$, in particular
\begin{equation}\label{eq:U*V formula2}
\U_{\ij}(B)\ =\ \int\vec{U}_{i}(\{j\}\times B-x)\ \U_{\,ii}(dx)\ =\ \int\U_{\,ii}(B-x)\ \vec{U}_{i}(\{j\}\times dx)
\end{equation}
for all $j\in\cS$ and $B\in\cB(\R)$.
\end{Lemma}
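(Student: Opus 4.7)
The plan is to exploit the regenerative structure of the Markov chain $M$ at the successive return times to state $i$. Let $\sigma_0(i):=0$ and $\sigma_k(i)$, $k\ge 1$, be the successive return times of $M$ to $i$. Since $M$ is recurrent under the standing assumptions, $\sigma_k(i)<\infty$ $\Prob_i$-a.s.\ for every $k\ge 0$, so that we may write
\begin{equation*}
\sum_{n\ge 0}\1_{C}(M_{n},S_{n})\ =\ \sum_{k\ge 0}\sum_{n=\sigma_{k}(i)}^{\sigma_{k+1}(i)-1}\1_{C}(M_{n},S_{n})
\end{equation*}
$\Prob_i$-almost surely. Taking expectations under $\Prob_i$ gives a decomposition of $\U_i(C)$ into contributions from the individual cycles.

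Next I would apply the strong Markov property of $(M_n,S_n)_{n\ge 0}$ at each return time $\sigma_k(i)$. Since $M_{\sigma_k(i)}=i$, the post-$\sigma_k(i)$ process $(M_{\sigma_k(i)+n},S_{\sigma_k(i)+n}-S_{\sigma_k(i)})_{n\ge 0}$ is, conditionally on $\cF_{\sigma_k(i)}$, distributed like $(M_n,S_n)_{n\ge 0}$ under $\Prob_i$ and independent of $S_{\sigma_k(i)}$. Hence, writing $x=S_{\sigma_k(i)}$,
\begin{equation*}
\Erw_{i}\Bigg(\sum_{n=\sigma_{k}(i)}^{\sigma_{k+1}(i)-1}\1_{C}(M_{n},S_{n})\,\Big|\,S_{\sigma_{k}(i)}=x\Bigg)\ =\ \sum_{j\in\cS}\int\1_{C}(j,x+y)\ \vec{U}_{i}(\{j\}\times dy),
\end{equation*}
by the very definition of the pre-$\sigma(i)$ occupation measure $\vec{U}_i$. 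Summing over $k\ge 0$ and invoking Lemma \ref{lem:MRM to ordinary RM}, which identifies $\U_{ii}$ with the distribution of $\sum_{k\ge 0}\delta_{S_{\sigma_k(i)}}$ under $\Prob_i$, yields
\begin{equation*}
\U_{i}(C)\ =\ \int\Bigg(\sum_{j\in\cS}\int\1_{C}(j,x+y)\ \vec{U}_{i}(\{j\}\times dy)\Bigg)\,\U_{\,ii}(dx),
\end{equation*}
which is \eqref{eq:U*V formula1}. The interchange of summation and integration is justified by nonnegativity (Tonelli).

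Finally, specializing $C=\{j\}\times B$ gives the first equality in \eqref{eq:U*V formula2}, and the second equality then follows from Tonelli's theorem applied to the product measure $\U_{ii}\otimes\vec{U}_i(\{j\}\times\cdot)$, after observing that $\iint\1_B(x+y)\,\U_{ii}(dx)\,\vec{U}_i(\{j\}\times dy)=\iint\1_B(x+y)\,\vec{U}_i(\{j\}\times dy)\,\U_{ii}(dx)$. The main technical point is a careful statement and application of the strong Markov property at $\sigma_k(i)$; once that is in place, everything else is bookkeeping. Integrability issues are trivial since all integrands are nonnegative, and both $\U_{ii}$ and $\vec{U}_i(\{j\}\times\cdot)$ are $\sigma$-finite (the latter has total mass $\pi_j<\infty$ by \eqref{eq:V_i(j times R)}).
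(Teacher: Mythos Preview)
Your argument is correct and follows exactly the approach the paper takes: the paper's proof simply writes the sum $\sum_{n\ge 0}\1_{C}(M_{n},S_{n})$ as a double sum over cycles between successive returns to $i$ and then says ``the assertion follows by a standard conditioning argument,'' which is precisely the strong Markov property at each $\sigma_{k}(i)$ that you spell out. Your version is more detailed than the paper's (in particular you make the Tonelli justification for \eqref{eq:U*V formula2} explicit), but the underlying idea is identical.
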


\begin{proof}
Writing
\begin{align*}
\U_{i}(C)\ =\ \Erw_{i}\left(\sum_{n\ge 0}\sum_{k=0}^{\sigma_{n+1}(i)-\sigma_{n}(i)-1}\1_{C}(M_{\sigma_{n}(i)+k},S_{\sigma_{n}(i)+k})\right)
\end{align*}
the assertion follows by a standard conditioning argument.\qed
\end{proof}

\begin{Lemma}\label{lem:MRM locally finite}
Under the stated assumptions,
\begin{equation*}
\sup_{t\in\R}\,\U_{\ij}([t,t+h])\ \le\ \pi_{j}\,\U_{\,ii}([-h,h])
\end{equation*}
for all $j\in\cS$ and $h>0$.
\end{Lemma}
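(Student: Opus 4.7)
My plan is to combine the convolution identity from Lemma \ref{lem:MRM occupation measure formula} with a translation-invariance-type bound on $\U_{\,ii}$ itself, established by a standard strong-Markov / first-entrance argument applied to the ordinary random walk $(S_{\sigma_{n}(i)})_{n\ge 0}$.

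Starting from the second formula in \eqref{eq:U*V formula2}, for any $t\in\R$ and $h>0$,
$$ \U_{\ij}([t,t+h])\ =\ \int\U_{\,ii}([t-x,t-x+h])\,\vec{U}_{i}(\{j\}\times dx), $$
so it suffices to show the auxiliary estimate
$$ \sup_{s\in\R}\U_{\,ii}([s,s+h])\ \le\ \U_{\,ii}([-h,h]) $$
and then integrate: by \eqref{eq:V_i(j times R)} the total mass of $\vec{U}_{i}(\{j\}\times\cdot\,)$ equals $\pi_{j}$, which yields the claimed bound upon taking the supremum over $t$.

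For the auxiliary estimate, I exploit that by Lemma \ref{lem:MRM to ordinary RM} the measure $\U_{\,ii}$ is the ordinary renewal measure of the i.i.d.\ random walk $R_{n}:=S_{\sigma_{n}(i)}$ under $\Prob_{i}$ (with $R_{0}=0$). Fix $s\in\R$ and let $\tau:=\inf\{n\ge 0:R_{n}\in[s,s+h]\}$ be the first entrance time into $[s,s+h]$. On $\{\tau<\infty\}$ the overshoot $R_{\tau}-s$ lies in $[0,h]$, so the shifted interval $[s-R_{\tau},s+h-R_{\tau}]$ is contained in $[-h,h]$. Writing
$$ \U_{\,ii}([s,s+h])\ =\ \Erw_{i}\!\left(\1_{\{\tau<\infty\}}\sum_{m\ge 0}\1_{\{R_{\tau+m}\in[s,s+h]\}}\right)\ =\ \Erw_{i}\!\left(\1_{\{\tau<\infty\}}\sum_{m\ge 0}\1_{\{R'_{m}\in[s-R_{\tau},s+h-R_{\tau}]\}}\right) $$
with $R'_{m}:=R_{\tau+m}-R_{\tau}$, and invoking the strong Markov property (which makes $(R'_{m})_{m\ge 0}$ an independent copy of $(R_{n})_{n\ge 0}$ under $\Prob_{i}$, as the chain $M$ regenerates at each $\sigma_{n}(i)$), the containment in $[-h,h]$ gives
$$ \U_{\,ii}([s,s+h])\ \le\ \Prob_{i}(\tau<\infty)\,\U_{\,ii}([-h,h])\ \le\ \U_{\,ii}([-h,h]). $$

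The only nontrivial step is the auxiliary estimate, since the increment distribution $\Prob_{i}(S_{\sigma(i)}\in\cdot)$ of the embedded walk is not concentrated on $[0,\infty)$, so one cannot simply translate the interval to the origin. The strong Markov argument at $\tau$ circumvents this by bounding the post-entrance contribution in terms of the symmetric interval $[-h,h]$, which absorbs every possible overshoot.
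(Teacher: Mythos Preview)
Your proof is correct and follows exactly the same route as the paper: the paper also uses the convolution formula \eqref{eq:U*V formula2}, the identity $\vec U_{i}(\{j\}\times\R)=\pi_{j}$ from \eqref{eq:V_i(j times R)}, and the bound $\sup_{s\in\R}\U_{\,ii}([s,s+h])\le\U_{\,ii}([-h,h])$. The only difference is that the paper simply cites this last inequality as ``well-known from ordinary renewal theory,'' whereas you supply the standard first-entrance/strong-Markov argument for it.
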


\begin{proof}
It is well-known from ordinary renewal theory that
$$ \sup_{t\in\R}\,\U_{\,ii}([t,t+h])\ \le\ \U_{\,ii}([-h,h]) $$
for any $h>0$. Using this and \eqref{eq:V_i(j times R)} in combination with Lemma \ref{lem:MRM occupation measure formula}, we obtain as claimed
\begin{align*}
\U_{\ij}([t,t+h])\ &=\ \int\U_{\,ii}[t-x,t+h-x])\ \vec{U}_{i}(\{j\}\times dx)\\
&\le\ \vec{U}_{i}(\{j\}\times\R)\,\U_{\,ii}[-h,h])\\
&=\ \pi_{j}\,\U_{\,ii}([-h,h])
\end{align*}
for all $j\in\cS$, $t\in\R$ and $h>0$.\qed
\end{proof}

\subsection{Markov renewal theorems}\label{subsec:3.2}

It is now fairly straightforward to derive the Markov renewal theorem in the present setup by drawing on Blackwell's renewal theorem and the key renewal theorem from standard renewal theory. Since $\pi$ is generally unique only up to positive scalars, it should be observed that
$\pi(\cdot)/\mu$ with $\mu$ defined by \eqref{eq:positive drift} does not depend on the particular choice of $\pi$.

\begin{Theorem}[Markov renewal theorem I]\label{thm:MRT1}
Under the assumptions stated at the beginning of this section,
\begin{equation*}
\lim_{t\to\infty}\U_{i}(A\times [t,t+h])\ =\ \frac{\pi(A)\,h}{\mu}\quad\text{and}\quad\lim_{t\to-\infty}\U_{i}(A\times [t,t+h])\ =\ 0
\end{equation*}
for all $i\in\cS$, $\pi$-finite $A\subset\cS$ and $h>0$.
\end{Theorem}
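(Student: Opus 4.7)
The plan is to reduce the statement to an application of the classical Blackwell renewal theorem for the zero-delayed ordinary random walk $(S_{\sigma_n(i)})_{n\ge 0}$ under $\Prob_i$, via the occupation-measure decomposition from Lemma~\ref{lem:MRM occupation measure formula}. Throughout, I normalize $\pi$ by $\pi_i=1$ as in \eqref{eq:def pi}, so that Lemma~\ref{lem:Erw S_sigma(j)} yields $\Erw_i S_{\sigma(i)}=\mu>0$; since $\pi(A)/\mu$ is invariant under rescaling of $\pi$, this entails no loss.

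First I would reduce to the singleton case $A=\{j\}$. Writing
$$\U_i(A\times [t,t+h])\ =\ \sum_{j\in A}\U_{\ij}([t,t+h]),$$
Lemma~\ref{lem:MRM locally finite} bounds each summand uniformly in $t$ by $\pi_j\,\U_{\,ii}([-h,h])$, which is summable over $j\in A$ because $\pi(A)<\infty$ and $\U_{\,ii}([-h,h])<\infty$ by classical renewal theory. Dominated convergence then reduces the theorem to showing, for every fixed $i,j\in\cS$, that
$$\lim_{t\to\infty}\U_{\ij}([t,t+h])\ =\ \frac{\pi_j\,h}{\mu}\quad\text{and}\quad\lim_{t\to-\infty}\U_{\ij}([t,t+h])\ =\ 0.$$

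Next, I would apply Lemma~\ref{lem:MRM occupation measure formula} to write
$$\U_{\ij}([t,t+h])\ =\ \int\U_{\,ii}([t-x,t+h-x])\ \vec{U}_{i}(\{j\}\times dx).$$
By Lemma~\ref{lem:MRM to ordinary RM}, $\U_{\,ii}$ is the (zero-delayed) ordinary renewal measure of $(S_{\sigma_n(i)})_{n\ge 0}$ under $\Prob_i$, whose increment $S_{\sigma(i)}$ has positive finite mean $\mu$ and is nonarithmetic by Lemma~\ref{lem:lattice-type}. Hence Blackwell's renewal theorem gives, for each fixed $x\in\R$,
$$\lim_{t\to\infty}\U_{\,ii}([t-x,t+h-x])\ =\ \frac{h}{\mu},\qquad \lim_{t\to-\infty}\U_{\,ii}([t-x,t+h-x])\ =\ 0.$$

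Finally I would pass to the limit inside the integral by dominated convergence: the integrand is bounded uniformly in $x$ and $t$ by the finite constant $\U_{\,ii}([-h,h])$, and the dominating measure $\vec{U}_i(\{j\}\times\cdot)$ has finite total mass $\pi_j$ by \eqref{eq:V_i(j times R)}. This yields
$$\lim_{t\to\infty}\U_{\ij}([t,t+h])\ =\ \frac{h}{\mu}\,\vec{U}_i(\{j\}\times\R)\ =\ \frac{\pi_j\,h}{\mu}$$
and analogously $0$ in the $t\to-\infty$ case, completing the reduction. The only point requiring care is the twofold use of dominated convergence, for which Lemma~\ref{lem:MRM locally finite} and \eqref{eq:V_i(j times R)} provide exactly the integrable majorants needed; no further obstacle is expected.
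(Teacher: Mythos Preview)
Your proof is correct and close in spirit to the paper's, but the route to the singleton limit differs slightly. The paper applies Blackwell's theorem directly to each $\U_{\ij}$, invoking Lemma~\ref{lem:MRM to ordinary RM} to identify $\U_{\ij}$ as the renewal measure of the (delayed, when $i\ne j$) random walk $(S_{\sigma_n(j)})_{n\ge 1}$ under $\Prob_i$, and then Lemma~\ref{lem:Erw S_sigma(j)} to read off the limit $h/\Erw_jS_{\sigma(j)}=\pi_j h/\mu$. You instead use the convolution formula of Lemma~\ref{lem:MRM occupation measure formula} to express $\U_{\ij}$ as $\vec U_i(\{j\}\times\cdot)*\U_{\,ii}$, apply Blackwell only to the zero-delayed $\U_{\,ii}$, and pass the limit through the integral against the finite measure $\vec U_i(\{j\}\times\cdot)$. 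Both arguments are valid; yours has the mild advantage of needing Blackwell only in the zero-delayed case (so no separate check that the delay distribution $\Prob_i(S_{\sigma_1(j)}\in\cdot)$ behaves well), while the paper's is one step shorter once one accepts the delayed version. The outer reduction over $j\in A$ via Lemma~\ref{lem:MRM locally finite} and dominated convergence is identical in both.
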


\begin{proof}
This is now a direct consequence of Blackwell's renewal theorem (applied to the $\U_{\ij})$ and the dominated convergence theorem, when using that
$$ \U_{i}(A\times [t,t+h])\ =\ \sum_{j\in A}\U_{\ij}([t,t+h])\ =\ \sum_{j\in A}\U_{\ij}([t,t+h]) $$
by Lemma \ref{lem:MRM to ordinary RM}, that $\sum_{j\in A}\U_{\ij}([t,t+h])\le\pi(A)\,\U_{\,ii}([-h,h])$ (Lemma \ref{lem:MRM locally finite}), and finally
$$ \lim_{t\to\infty}\U_{\ij}([t,t+h])\ =\ \frac{1}{\Erw_{j}S_{\sigma(j)}}\ =\ \frac{\pi_{j}}{\mu} $$
for any $j\in\cS$ (Lemma \ref{lem:Erw S_sigma(j)}).\qed
\end{proof}

Turning to the functional version of the previous result, recall from \eqref{eq:dRi1} and \eqref{eq:dRi2} the definition of a $\pi$-directly Riemann integrable function $g$. The asymptotic behavior of
$$ \U_{i}*g(t)\ =\ \sum_{j\in\cS}\int g_{j}(t-x)\ \U_{\ij}(dx) $$
for any such $g$ and $i,j\in\cS$ is described by the second Markov renewal theorem:

\begin{Theorem}[Markov renewal theorem II]\label{thm:MRT2}
Under the assumptions stated at the beginning of this section, $\U_{i}*g$ is a bounded function satisfying
\begin{align}
&\lim_{t\to\infty}\U_{i}*g(t)\ =\ \frac{1}{\mu}\sum_{j\in\cS}\pi_{j}\int g_{j}(x)\ dx\quad\text{and}\label{eq:MRT2a}\\
&\hspace{1.8cm}\lim_{t\to-\infty}\U_{i}*g(t)\ =\ 0.\label{eq:MRT2b}
\end{align}
for any $\pi$-directly Riemann integrable function $g$ and $i\in\cS$.
\end{Theorem}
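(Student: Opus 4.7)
The plan is to reduce Theorem \ref{thm:MRT2} to the classical key renewal theorem applied to the ordinary renewal measure $\U_{\,ii}$, exploiting the convolution identity from Lemma \ref{lem:MRM occupation measure formula}. Fix $i\in\cS$ and normalize $\pi$ so that $\pi_i=1$; by Lemma \ref{lem:Erw S_sigma(j)} this gives $\mu=\Erw_i S_{\sigma(i)}$, the drift of the ordinary random walk $(S_{\sigma_n(i)})_{n\ge 0}$, which is nonarithmetic by Lemma \ref{lem:lattice-type}. For each $j\in\cS$, Lemma \ref{lem:MRM occupation measure formula} rewrites $\U_{\ij}$ as a convolution of $\U_{\,ii}$ against the finite measure $\vec U_i(\{j\}\times\cdot)$ of total mass $\pi_j$, whence
$$ \U_{\ij}*g_j(t)\ =\ \int\U_{\,ii}*g_j(t-x)\ \vec U_i(\{j\}\times dx). $$
Since $\pi_j>0$ by irreducibility, the $\pi$-d.R.i.\ hypothesis forces each $g_j$ to be directly Riemann integrable in the ordinary sense, so the classical key renewal theorem yields the pointwise limit $\U_{\,ii}*g_j(t-x)\to\mu^{-1}\int g_j(v)\,dv$ for every fixed $x$, as $t\to\infty$.

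To pass this through the integral against $\vec U_i(\{j\}\times dx)$ I would apply dominated convergence using the standard renewal bound $\sup_s\U_{\,ii}([s,s+\eps])\le\U_{\,ii}([-\eps,\eps])$, which produces a dominating constant $\U_{\,ii}([-\eps,\eps])\sum_{n\in\Z}\sup_{n\eps<u\le (n+1)\eps}|g_j(u)|$ independent of both $x$ and $t$. This gives $\lim_{t\to\infty}\U_{\ij}*g_j(t)=\pi_j\mu^{-1}\int g_j\,dv$. Combining the same bound with Lemma \ref{lem:MRM locally finite}, the majorant multiplied by $\pi_j$ is summable over $j\in\cS$ by the $\pi$-d.R.i.\ hypothesis, which simultaneously establishes boundedness of $\U_i*g$ and delivers the uniform-in-$t$ control needed to truncate $g$ to finite subsets $A_N\uparrow\cS$ and interchange $\lim_{t\to\infty}$ with $\sum_{j\in\cS}$. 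This yields \eqref{eq:MRT2a}. For \eqref{eq:MRT2b} the same architecture applies: the classical key renewal theorem also yields $\U_{\,ii}*g_j(t-x)\to 0$ as $t\to-\infty$ (since the random walk has positive drift, $\U_{\,ii}$ is bounded on every $(-\infty,T]$, and d.R.i.\ forces $g_j$ to vanish at $-\infty$), and the same dominated-convergence and truncation argument delivers the claim.

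The main obstacle is the uniform-in-$t$ interchange of $\sum_{j\in\cS}$ with the limit $t\to\infty$, since each $\U_{\ij}*g_j(t)$ converges only pointwise. The resolution is precisely the interplay between the $\pi_j$ factor supplied by Lemma \ref{lem:MRM locally finite} and the $\pi$-weight appearing in the d.R.i.\ hypothesis, which together produce a summable majorant. Scale-invariance of the final expression $\mu^{-1}\sum_{j\in\cS}\pi_j\int g_j\,dv$ under rescaling of $\pi$ and $\mu$ makes the initial normalization $\pi_i=1$ harmless.
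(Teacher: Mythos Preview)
Your proposal is correct and follows essentially the same strategy as the paper: reduce to the classical key renewal theorem, then use the uniform bound of Lemma \ref{lem:MRM locally finite} together with the $\pi$-d.R.i.\ hypothesis to justify dominated convergence in the sum over $j\in\cS$. The only minor difference is tactical: the paper identifies each $\U_{\ij}$ directly as an ordinary (possibly delayed) renewal measure via Lemma \ref{lem:MRM to ordinary RM} and applies the key renewal theorem to it in one step, whereas you route through the convolution identity of Lemma \ref{lem:MRM occupation measure formula} and apply the key renewal theorem only to $\U_{\,ii}$, followed by an inner dominated convergence against the finite measure $\vec U_i(\{j\}\times\cdot)$---this is slightly longer but equally valid and has the small advantage of needing only the zero-delayed form of the classical theorem.
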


\begin{proof}
W.l.o.g.\ let $g$ be nonnegative. Define
$$ \ovl{g}_{s}^{\rho}(t)\ :=\ \sum_{n\in\Z}\bigg(\sup_{n\rho<x\le (n+1)\rho}g_{s}(x)\bigg)\1_{(n\rho,(n+1)\rho]}(t)\quad ((s,t)\in\cS\times\R), $$
for $\rho>0$. Since all $\pi_{i}$ are positive, condition \eqref{eq:dRi2} ensures that $\ovl{g}_{i}^{\,\eps}$ is a directly Riemann integrable (in the ordinary sense) majorant of $g_{i}$
for each $i\in\cS$, which in combination with \eqref{eq:dRi1} implies that $g_{i}$ for any $i$ is directly Riemann integrable as well. Hence, by the key renewal theorem,
$$ \lim_{t\to\infty}\U_{\ij}*g_{j}(t)\ =\ \frac{\pi_{j}}{\mu}\quad\text{and}\quad\lim_{t\to-\infty}\U_{\ij}*g_{j}(t)\ =\ 0 $$
for all $i,j\in\cS$. Now fix any $i\in\cS$ and choose $\pi=\pi^{(i)}$, thus $\pi_{i}=1$. Use Lemma \ref{lem:MRM locally finite} together with \eqref{eq:dRi2} to infer
\begin{align}
\sum_{j\in\cS}\U_{\ij}*g(t)\ &\le\ \sum_{j\in\cS}\U_{\ij}*\ovl{g}^{\,\eps}(t)\nonumber\\
&=\ \sum_{j\in\cS}\sum_{n\in\Z}\bigg(\sup_{n\eps<x\le (n+1)\eps}g_{j}(x)\bigg)\,\U_{\ij}([t-(n+1)\eps,t-n\eps))\nonumber\\
&\le\ \U_{\,ii}([-\eps,\eps])\sum_{j\in\cS}\pi_{j}\sum_{n\in\Z}\bigg(\sup_{n\eps<x\le (n+1)\eps}g_{j}(x)\bigg)\ <\ \infty.\label{eq1:g*U bounded}
\end{align}
Since, by Lemma \ref{lem:MRM to ordinary RM}, we further have that
\begin{align}\label{eq2:g*U bounded}
\U_{i}*g(t)\ &=\ \sum_{j\in\cS}\int g_{j}(t-x)\ \U_{\ij}(dx)\ =\ \sum_{j\in\cS}\U_{\ij}*g(t)
\end{align}
the convergence assertions now follow by an appeal to the dominated convergence theorem.
A combination of \eqref{eq1:g*U bounded} and \eqref{eq2:g*U bounded} further shows the boundedness of $\U_{i}*g$ for each $i\in\cS$.\qed
\end{proof}

\subsection{The spread-out case: a Stone-type decomposition}\label{subsec:3.3}

The final subsection deals with the situation when $(M_{n},S_{n})_{n\ge 0}$ is \emph{spread out} which means that some convolution power of $\Prob_{\pi}(X_{1}\in\cdot)$ is nonsingular with respect to $\llam$ or, equivalently, that $F_{rs}^{*n}$ is nonsingular with respect to $\llam$ for some $r,s\in\cS$ with $p_{rs}^{(n)}>0$ and some $n\in\N$. In this case, Lemma \ref{lem:MRM to ordinary RM} further allows us to derive a Stone-type decomposition of the Markov renewal measure in a very straightforward manner. We begin with a preliminary result on the ordinary renewal measures $\U_{\ij}$.

\begin{Prop}\label{prop:Stone-type U_ij}
Let $(M_{n},S_{n})_{n\ge 0}$ be spread out. Then there exist finite measures $\U_{\ij}^{1}$ and $\llam$-continuous measures $\U_{\ij}^{2}=\upsilon_{\ij}\llam$ such that the following assertions hold for all $i,j\in\cS$:
\begin{description}[(c)]
\item[(a)] $\U_{\ij}=\U_{\ij}^{1}+\U_{\ij}^{2}$.\vspace{.05cm}
\item[(b)] If $i\ne j$, then $\U_{\ij}^{1}=F_{\ij}*\U_{\jj}^{1}$, $\U_{\ij}^{2}=F_{\ij}*\U_{\jj}^{2}$ and $\upsilon_{\ij}=F_{\ij}*\upsilon_{\jj}$.\vspace{.1cm}
\item[(c)] $\upsilon_{\ij}$ is continuous and bounded (uniformly in $i\in\cS$) with
$$ \lim_{t\to\infty}\upsilon_{\ij}(t)\ =\ \frac{\pi_{j}}{\mu}\quad\text{and}\quad\lim_{t\to-\infty}\upsilon_{\ij}(t)\ =\ 0. $$
\end{description}
\end{Prop}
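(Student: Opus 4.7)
The plan is to reduce the proposition to the classical (ordinary) Stone decomposition by exploiting Lemma~\ref{lem:MRM to ordinary RM}, which identifies the diagonal entry $\U_{\,ii}$ with the renewal measure of the zero-delayed random walk $(S_{\sigma_{n}(i)})_{n\ge 0}$ under $\Prob_{i}$. Once it is shown that this embedded walk is itself spread out, Stone's theorem immediately delivers the decomposition of $\U_{\,ii}$, and the off-diagonal pieces $\U_{\ij}$ follow by a strong Markov argument at the first time $M$ hits $j$.

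The crucial preliminary step is that $\Prob_{i}(S_{\sigma(i)}\in\cdot)$ is spread out for every $i\in\cS$. By assumption there exist $r,s\in\cS$ and $n\in\N$ with $p_{rs}^{(n)}>0$ such that $F_{rs}^{*n}$ has a nontrivial $\llam$-continuous part. Writing $F_{rs}^{*n}$ as a weighted sum over $n$-step paths from $r$ to $s$, at least one such path $r=j_{0},j_{1},\ldots,j_{n}=s$ must contribute to the $\llam$-continuous part, so that $F_{rj_{1}}*F_{j_{1}j_{2}}*\cdots*F_{j_{n-1}s}$ is itself $\llam$-nonsingular. Irreducibility and recurrence of $M$ then make it a positive-probability event under $\Prob_{i}$ that the excursion from $i$ visits $r$ at some time $\tau$, follows this fixed path to $s$ by time $\tau+n$, and returns to $i$ at time $\sigma(i)$ without intermediate visits. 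Conditioning on the full sequence of visited states along such an excursion, the increments of $S$ before $\tau$, between $\tau$ and $\tau+n$, and after $\tau+n$ are independent, the middle block being distributed as $F_{rj_{1}}*\cdots*F_{j_{n-1}s}$. Since convolution preserves $\llam$-nonsingularity, $\Prob_{i}(S_{\sigma(i)}\in\cdot)$ has a nontrivial absolutely continuous part and is therefore spread out.

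With spread-outness in hand, together with the positive drift $\Erw_{i}S_{\sigma(i)}=\mu/\pi_{i}$ supplied by Lemma~\ref{lem:Erw S_sigma(j)}, I would invoke the classical Stone decomposition for ordinary random walks to obtain $\U_{\,ii}=\U_{\,ii}^{1}+\upsilon_{\,ii}\llam$ with $\U_{\,ii}^{1}$ finite and $\upsilon_{\,ii}$ bounded, continuous, and satisfying $\upsilon_{\,ii}(t)\to\pi_{i}/\mu$ as $t\to\infty$ and $\upsilon_{\,ii}(t)\to 0$ as $t\to-\infty$. This settles (a)--(c) for $j=i$. For $i\ne j$, the strong Markov property of $(M_{n},S_{n})_{n\ge 0}$ applied at the first time $T_{j}$ that $M$ hits $j$ yields $\U_{\ij}=G_{\ij}*\U_{\jj}$, where $G_{\ij}$ denotes the (proper, by recurrence) $\Prob_{i}$-distribution of $S_{T_{j}}$, i.e.\ the first-passage distribution of $S$ to $j$ (which is the object playing the role of ``$F_{\ij}$'' in the statement). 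Distributing this convolution over the diagonal decomposition supplies $\U_{\ij}^{1}:=G_{\ij}*\U_{\jj}^{1}$ (finite) and $\upsilon_{\ij}:=G_{\ij}*\upsilon_{\jj}$; boundedness of $\upsilon_{\ij}$ by $\|\upsilon_{\jj}\|_{\infty}$ (hence uniform in $i$), continuity, and the prescribed limits at $\pm\infty$ all follow by dominated convergence.

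The main obstacle is the very first step: transferring the spread-out hypothesis from a single pair $(r,s)$ with $p_{rs}^{(n)}>0$ to the return-time sums $S_{\sigma(i)}$ for arbitrary $i$. Everything after that is either a direct invocation of classical renewal theory or a one-line convolution manipulation.
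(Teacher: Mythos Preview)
Your proposal is correct and follows essentially the same route as the paper: show that the embedded return-time walk $(S_{\sigma_{n}(i)})_{n\ge 0}$ is spread out by routing a cycle through the pair $(r,s)$ with nonsingular $F_{rs}^{*n}$, apply the classical Stone decomposition to $\U_{\,ii}$, and then obtain the off-diagonal entries from the first-passage relation $\U_{\ij}=G_{\ij}*\U_{\jj}$ together with dominated convergence. Your treatment of the spread-out step---first decomposing $F_{rs}^{*n}$ over $n$-step paths to isolate a single path with $\llam$-nonsingular convolution---is in fact a shade more careful than the paper, which bounds $G_{ii}$ below by the convolution along a fixed cycle passing through $r$ and $s$ at consecutive times.
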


\begin{proof}
Pick any $i\in\cS$. If $r,s\in\cS$ are such that $F_{rs}^{*n}$ has a convolution power is nonsingular with respect to $\llam$ for some $n\in\N$, then choose a cyclic path $(i,r_{1},...,r_{m},i)$ of positive probability $p$ that passes through $r$ and $s$ at consecutive times. This is possible because $(M_{n})_{n\ge 0}$ is irreducible and $p_{rs}>0$. It follows that
\begin{align*}
G_{ii}\ :=\ \Prob_{i}^{S_{\sigma_{1}(i)}}\ &=\ p_{ii}F_{ii}+\sum_{n\ge 2}\ \sum_{i_{1},...,i_{n-1}\in\cS\backslash\{i\}}p_{ii_{1}}\cdot...\cdot p_{i_{n-1}i}\,F_{ii_{1}}*...*F_{i_{n-1}i}\\
&\ge\ p\,F_{ir_{1}}*...*F_{rs}*...*F_{r_{n}i}
\end{align*}
and hence that $G_{ii}$ is spread out. Consequently, Stone's decomposition for ordinary renewal measures provides us with $\U_{\,ii}=\U_{\,ii}^{1}+\U_{\,ii}^{2}$ for some finite measure $\U_{\,ii}^{1}$ and some $\llam$-continuous measure $\U_{\,ii}^{2}=\upsilon_{ii}\llam$ such that $\upsilon_{ii}$ is bounded and continuous with limit 0 at $-\infty$ and
$$ \lim_{t\to\infty}\upsilon_{ii}(t)\ =\ \frac{1}{\Erw_{i}S_{\sigma(i)}}\ =\ \frac{\pi_{i}}{\mu}. $$
All remaining assertions are now easily derived when using $\U_{\ij}=F^{\ij}*\U_{\jj}$ for $i,j\in\cS$ with $i\ne j$. Further details are therefore omitted.\qed
\end{proof}

It is now very easy to further obtain a Stone-type decomposition of the Markov renewal measures $\U_{i}$ for $i\in\cS$.

\begin{Theorem}[Stone-type decomposition]\label{thm:MRM spread out}
Let $(M_{n},S_{n})_{n\ge 0}$ be spread out. Then the following assertions hold true for each $i\in\cS$: There exists a finite measure $\U_{i}^{1}$ and a $\pi\otimes\llam$-continuous measure $\U_{i}^{2}$ with density $\upsilon_{i}$ such that
\begin{description}[(b)]
\item[(a)] $\U_{i}=\U_{i}^{1}+\U_{i}^{2}$.
\item[(b)] $\upsilon_{i}$ is bounded on any $\cS_{0}\times\R$ with $\sup_{i\in\cS_{0}}\pi_{i}<\infty$.
\item[(c)] $\upsilon_{\ij}(\cdot):=\upsilon_{i}(j,\cdot)$ is continuous for any $j\in\cS$ and satisfies
\begin{equation*}
\lim_{t\to\infty}\upsilon_{\ij}(t)\ =\ \frac{1}{\mu}\quad\text{and}\quad\lim_{t\to-\infty}\upsilon_{\ij}(t)\ =\ 0.
\end{equation*}
\end{description}
\end{Theorem}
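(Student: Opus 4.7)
The plan is to build the claimed decomposition of the Markov renewal measure $\U_i$ by lifting the ordinary Stone decomposition of $\U_{\,ii}$ supplied by Proposition \ref{prop:Stone-type U_ij} to $\cS\times\R$ via the pre-$\sigma(i)$ occupation measure $\vec{U}_i$. The vehicle is the representation \eqref{eq:U*V formula1}, which displays $\U_i$ as a ``convolution'' of $\vec{U}_i$ and the ordinary renewal measure $\U_{\,ii}$. Writing $\U_{\,ii}=\U_{\,ii}^1+\U_{\,ii}^2$ with $\U_{\,ii}^2=\upsilon_{\,ii}\llam$ and $\U_{\,ii}^1$ finite, I would substitute this splitting into \eqref{eq:U*V formula1} and declare
\[
\U_i^\ell(C)\ :=\ \sum_{j\in\cS}\iint \1_C(j,x+y)\ \vec{U}_i(\{j\}\times dy)\ \U_{\,ii}^\ell(dx),\qquad \ell=1,2.
\]
Assertion (a), $\U_i=\U_i^1+\U_i^2$, is then immediate.

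Finiteness of $\U_i^1$ is checked via Fubini together with \eqref{eq:V_i(j times R)}:
\[
\U_i^1(\cS\times\R)\ =\ \U_{\,ii}^1(\R)\sum_{j\in\cS}\vec{U}_i(\{j\}\times\R)\ =\ \U_{\,ii}^1(\R)\,\Erw_i\sigma(i),
\]
which is finite precisely in the positive recurrent case -- ensured, in the quasi-stochastic applications, by the normalizations in \eqref{Perron eigenvector normalized}. For $\U_i^2$, the change of variable $t=x+y$ rewrites the defining formula as
\[
\U_i^2(C)\ =\ \sum_{j\in\cS}\int \1_C(j,t)\,\bigl(\vec{U}_i(\{j\}\times\cdot)*\upsilon_{\,ii}\bigr)(t)\ dt,
\]
which exhibits $\U_i^2$ as $\pi\otimes\llam$-continuous with density
\[
\upsilon_i(j,t)\ =\ \pi_j^{-1}\,\bigl(\vec{U}_i(\{j\}\times\cdot)*\upsilon_{\,ii}\bigr)(t).
\]

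The remaining items in (b) and (c) are then routine consequences. Continuity of $\upsilon_{\ij}(\cdot)=\upsilon_i(j,\cdot)$ in $t$ is inherited from $\upsilon_{\,ii}$ by dominated convergence inside the convolution. The pointwise estimate
\[
|\upsilon_i(j,t)|\ \le\ \pi_j^{-1}\,\|\upsilon_{\,ii}\|_\infty\,\vec{U}_i(\{j\}\times\R)\ =\ \|\upsilon_{\,ii}\|_\infty,
\]
obtained from \eqref{eq:V_i(j times R)}, yields boundedness on the whole product $\cS\times\R$ and in particular establishes (b). For the limits in (c), fixing the normalization $\pi_i=1$ from Subsection \ref{subsec:3.1} together with Proposition \ref{prop:Stone-type U_ij}(c) gives $\upsilon_{\,ii}(t)\to 1/\mu$ as $t\to\infty$; one further application of dominated convergence then yields $\bigl(\vec{U}_i(\{j\}\times\cdot)*\upsilon_{\,ii}\bigr)(t)\to\pi_j/\mu$, hence $\upsilon_{\ij}(t)\to 1/\mu$, and analogously $\upsilon_{\ij}(t)\to 0$ as $t\to-\infty$. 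The only substantive obstacle in this scheme is the finiteness $\Erw_i\sigma(i)<\infty$: without positive recurrence of $M$, the proposed splitting would produce an infinite $\U_i^1$ and a different decomposition would have to be sought.
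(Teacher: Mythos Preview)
Your argument is essentially the paper's own: both lift Stone's decomposition of $\U_{\,ii}$ to $\cS\times\R$ through the pre-$\sigma(i)$ occupation measure formula \eqref{eq:U*V formula1}, arriving at the same density $\upsilon_{\ij}(t)=\pi_j^{-1}\Erw_i\big(\sum_{n=0}^{\sigma(i)-1}\1_{\{M_n=j\}}\upsilon_{\,ii}(t-S_n)\big)$ and the same uniform bound $\|\upsilon_{\ij}\|_\infty\le\|\upsilon_{\,ii}\|_\infty$. Your caveat about positive recurrence is well placed and in fact sharper than the paper's treatment: the paper records the total mass of $\U_i^1$ as $\pi_i^{-1}\U_{\,ii}^1(\R)$, a formula that only makes sense when $\pi$ is summable (i.e.\ $\Erw_i\sigma(i)<\infty$), so the same restriction is tacitly present there.
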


\begin{proof}
Fix any $i\in\cS$ and let $\pi$ once again be defined by \eqref{eq:def pi}, thus $\pi_{i}=1$. Using
\begin{align*}
\U_{i}(C)\ &=\ \int_{\R}\Erw_{i}\left(\sum_{n=0}^{\sigma(i)-1}\1_{C}(M_{n},x+S_{n})\right)\ \U_{\,ii}(dx)
\end{align*}
and Stone's decomposition for $\U_{ii}$ from the previous result, we arrive at the decompositon $\U_{i}=\U_{i}^{1}+\U_{i}^{2}$ into the finite measure
\begin{align*}
\U_{i}^{1}(C)\ :=\ \int_{\R}\Erw_{i}\left(\sum_{n=0}^{\sigma(i)-1}\1_{C}(M_{n},x+S_{n})\right)\ \U_{\,ii}^{1}(dx)
\end{align*}
with total mass $\U_{i}^{1}(\cS\times\R)=\pi_{i}^{-1}\U_{\,ii}^{1}(\R)$ and the $\sigma$-finite measure
\begin{align*}
\U_{i}^{2}(C)\ :=&\ \int_{\R}\Erw_{i}\left(\sum_{n=0}^{\sigma(i)-1}\1_{C}(M_{n},x+S_{n})\right) \upsilon_{\,ii}(x)\ \llam(dx)\\
=&\ \int_{\R}\Erw_{i}\left(\sum_{n=0}^{\sigma(i)-1}\1_{C}(M_{n},x)\upsilon_{\,ii}(x-S_{n})\right)\ \llam(dx).
\end{align*}
Choosing $C=\{j\}\times B$ for arbitrary $j\in\cS$ and $B\in\cB(\R)$, we obtain
\begin{align*}
\U_{i}^{2}(\{j\}\times B)\ =\ \int_{B}\Erw_{i}\left(\sum_{n=0}^{\sigma(i)-1}\1_{\{M_{n}=j\}}\upsilon_{\,ii}(x-S_{n})\right)\ \llam(dx)
\end{align*}
and thereby that $\U_{i}^{2}$ has $\pi\otimes\llam$-density
\begin{align*}
\upsilon_{\ij}(t)\ =\ \pi_{j}^{-1}\Erw_{i}\left(\sum_{n=0}^{\sigma(i)-1}\1_{\{M_{n}=j\}}\,\upsilon_{\,ii}(t-S_{n})\right)\quad (t\in\R),
\end{align*}
which satisfies $($with $\|\cdot\|_{\infty}$ denoting the sup-norm$)$
$$ \upsilon_{\ij}(t)\ \le\ \|\upsilon_{ii}\|_{\infty} $$
for all $j\in\cS$ and $t\in\R$, thus $\|\upsilon_{i}\|_{\infty}\le\|\upsilon_{ii}\|_{\infty}$,
and is continuous in the second argument. The remaining asymptotic assertions are now derived by using the asymptotic properties of $\upsilon_{ii}$ stated in the previous proposition and the dominated convergence theorem.\qed
\end{proof}

In the spread-out case the class of functions $g$ satisying the assertions of Theorem \ref{thm:MRT2} can be relaxed.

\begin{Theorem}[Markov renewal theorem II: spread-out case]\label{thm:MRT3}
Let $(M_{n},S_{n})_{n\ge 0}$ be spread out and $g:\cS\times\R\to\R$ a measurable function satisfying (compare \eqref{eq:dRi1} and \eqref{eq:dRi2})
\begin{align}
&g_{i}\in L^{\infty}(\llam)\text{ and }\lim_{|x|\to\infty}g_{i}(x)=0\ \text{ for all }i\in\cS,\label{eq:dRi3'}\\
&\hspace{2cm}\sum_{i\in\cS}\pi_{i}\,\|g_{i}\|_{\infty}\ <\ \infty,\label{eq:dRi4}\\
&\hspace{.6cm}g\in L^{1}(\pi\otimes\llam),\text{ i.e. }\sum_{i\in\cS}\pi_{i}\|g_{i}\|_{1}\ <\ \infty.\label{eq:dRi5'}
\end{align}
Then all assertions of Theorem \ref{thm:MRT2} about the $\U_{i}*g$ remain valid.
\end{Theorem}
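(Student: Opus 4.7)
The plan is to exploit the Stone-type decomposition $\U_i=\U_i^1+\U_i^2$ from Theorem \ref{thm:MRM spread out} and reduce the assertion to two routine applications of dominated convergence. Fix $i\in\cS$ and, as in \eqref{eq:def pi}, choose $\pi=\pi^{(i)}$ so that $\pi_i=1$. A look at the construction in the proof of Theorem \ref{thm:MRM spread out} shows that $\U_i^1(\{j\}\times\R)=\pi_j\,\U_{\,ii}^1(\R)$ and $\U_i^2(\{j\}\times dx)=\pi_j\,\upsilon_{\ij}(x)\,\llam(dx)$, together with the uniform bound $|\upsilon_{\ij}(x)|\le\|\upsilon_{\,ii}\|_\infty=:C_i<\infty$ valid for all $j\in\cS$ and $x\in\R$. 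Accordingly, I would split $\U_i*g=\U_i^1*g+\U_i^2*g$ and treat each summand separately.

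For the finite part,
$$ \U_i^1*g(t)\ =\ \sum_{j\in\cS}\int g_j(t-x)\,\U_i^1(\{j\}\times dx). $$
By \eqref{eq:dRi3'}, $g_j(t-x)\to 0$ as $|t|\to\infty$ for every fixed $(j,x)$, while $|g_j(t-x)|\le\|g_j\|_\infty$. Hypothesis \eqref{eq:dRi4} then yields
$$ \sum_{j\in\cS}\|g_j\|_\infty\,\U_i^1(\{j\}\times\R)\ =\ \U_{\,ii}^1(\R)\sum_{j\in\cS}\pi_j\|g_j\|_\infty\ <\ \infty, $$
which both supplies an integrable dominating function on $\cS\times\R$ (so that $\U_i^1*g$ is bounded) and, by dominated convergence, yields $\U_i^1*g(t)\to 0$ as $t\to\pm\infty$.

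For the absolutely continuous part, the substitution $y=t-x$ gives
$$ \U_i^2*g(t)\ =\ \sum_{j\in\cS}\pi_j\int g_j(y)\,\upsilon_{\ij}(t-y)\,\llam(dy). $$
Fix $j$. Theorem \ref{thm:MRM spread out}(c) gives $\upsilon_{\ij}(t-y)\to 1/\mu$ (respectively $\to 0$) as $t\to\infty$ (respectively $t\to-\infty$) for each $y$, and $|\upsilon_{\ij}(t-y)|\le C_i$ together with $g_j\in L^1(\llam)$ (guaranteed by $\pi_j>0$ and \eqref{eq:dRi5'}) permits dominated convergence in the inner integral. To interchange the limit with the outer sum, note that each summand is bounded in absolute value by $C_i\pi_j\|g_j\|_1$, which is summable by \eqref{eq:dRi5'}; a second application of dominated convergence then delivers
$$ \lim_{t\to\infty}\U_i^2*g(t)\ =\ \frac{1}{\mu}\sum_{j\in\cS}\pi_j\int g_j(y)\,\llam(dy),\qquad\lim_{t\to-\infty}\U_i^2*g(t)\ =\ 0, $$
while the same majorant also gives boundedness of $\U_i^2*g$. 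Summing the two parts completes the proof. The only mild obstacle is the double interchange (integral/limit, then sum/limit) in the treatment of $\U_i^2*g$, but conditions \eqref{eq:dRi4} and \eqref{eq:dRi5'} are tailor-made to supply precisely the dominating objects required; all of the hard analytic work has already been absorbed into the Stone decomposition of $\U_i$.
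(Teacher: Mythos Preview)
Your argument is correct and follows the same overall strategy as the paper: split via a Stone-type decomposition, control the finite part through \eqref{eq:dRi4} and the absolutely continuous part through \eqref{eq:dRi5'}, each time by dominated convergence.

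The only difference is the level at which the decomposition is invoked. The paper decomposes $\U_{\,ii}$ (Proposition \ref{prop:Stone-type U_ij}) and then feeds this into the pre-$\sigma(i)$ occupation-measure representation $\U_i*g(t)=\Erw_i\big(\sum_{n=0}^{\sigma(i)-1}\U_{\,ii}*g_{M_n}(t-S_n)\big)$; for the absolutely continuous piece it then adds and subtracts $\upsilon_{\,ii}(t-x)$ to isolate the main term $\int\Erw_\pi g_{M_0}(x)\,\upsilon_{\,ii}(t-x)\,\llam(dx)$. You instead decompose $\U_i$ itself via Theorem \ref{thm:MRM spread out} and use the pointwise limits of $\upsilon_{\ij}$ directly. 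Since Theorem \ref{thm:MRM spread out} was obtained precisely by pushing the decomposition of $\U_{\,ii}$ through the occupation-measure formula, the two routes are equivalent; your version is marginally cleaner because the averaging over the pre-$\sigma(i)$ segment has already been absorbed into the density $\upsilon_{\ij}$, so the add-and-subtract step is not needed.
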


\begin{proof}
Again let $g$ w.l.o.g.\ be nonnegative. Fix any $i\in\cS$, choose $\pi=\pi^{(i)}$ and use Stone's decomposition of $\U_{\,ii}$ from Prop.\ \ref{prop:Stone-type U_ij}(a) to infer
\begin{equation*}
\U_{i}*g(t)\ =\ \Erw_{i}\Bigg(\sum_{n=0}^{\sigma(i)-1}\!\!\Big(\U_{\,ii}^{1}*g_{M_{n}}(t-S_{n})+\U_{\,ii}^{2}*g_{M_{n}}(t-S_{n})\Big)\Bigg)\ =:\ J_{1}(t)+J_{2}(t)
\end{equation*}
for all $t\in\R$. Put $G(i):=\|g_{i}\|_{\infty}$ and recall that $\|\U_{\,ii}^{1}\|:=\U_{\,ii}^{1}(\R)<\infty$. It follows that $\sum_{n=0}^{\sigma(i)-1}\U_{\,ii}^{1}*g_{M_{n}}(t-S_{n})\le \|\U_{\,ii}^{1}\|\,\sum_{n=0}^{\sigma(i)-1}G(M_{n})$ $\Prob_{i}$-a.s.,
\begin{align*}
J_{1}(t)\ \le\ \|\U_{\,ii}^{1}\|\,\Erw_{i}\left(\sum_{n=0}^{\sigma(i)-1}G(M_{n})\right)\ 
=\ \|\U_{\,ii}^{1}\|\,\Erw_{\pi}G(M_{0})\ <\ \infty\quad(\text{use \eqref{eq:occupation measure formula}})
\end{align*}
(thus the boundedness of $J_{1}$) and then with the dominated convergence theorem
$$ \lim_{|t|\to\infty}J_{1}(t)\ =\ 0, $$
for $\lim_{|t|\to\infty}g_{i}(t)=0$. Left with a study of $J_{2}(t)$, we note that
\begin{align*}
J_{2}(t)\ &=\ \Erw_{i}\Bigg(\sum_{n=0}^{\sigma(i)-1}\int_{\R}g_{M_{n}}(t-x-S_{n})\upsilon_{ii}(x)\ \llam(dx)\Bigg)\\
&=\ \int_{\R} \Erw_{i}\Bigg(\sum_{n=0}^{\sigma(i)-1}g_{M_{n}}(x)\upsilon_{ii}(t-x-S_{n})\Bigg)\ \llam(dx)\\
&=\ \int_{\R}\Erw_{\pi}g_{M_{0}}(x)\,\upsilon_{ii}(t-x)\ \llam(dx)\\
&\quad+\ \int_{\R}\Erw_{i}\Bigg(\sum_{n=0}^{\sigma(i)-1}g_{M_{n}}(x)\big(\upsilon_{ii}(t-x-S_{n})-\upsilon_{ii}(t-x)\big)\Bigg)\ \llam(dx)
\end{align*}
By combining the assumptions on $g$ with the properties of $\upsilon_{ii}$, it is now straightforward to conclude that $J_{2}$ is bounded and that the first term of the last two lines converges to the asserted respective limit as $t\to\pm\infty$, while the second one converges to 0. We omit further details.\qed
\end{proof}

\section{Proofs of the main results}\label{sec:proofs}

In view of the results of the previous section and the furnishing lemmata in Section \ref{sec:setup}, it is now straightforward to deduce our main theorems.

\begin{proof}[of Theorem \ref{MRT for Q}]
As noted at the beginning of Section \ref{sec:setup}, $P=D^{-1}QD$ has essentially unique left eigenvector $\pi=u^{\top}D=(\sfu_{i}\sfv_{i})_{i\in\cS}$ associated with eigenvalue 1, so that $\pi$ is the essentially unique stationary measure of the Markov chain $(M_{n})_{n\ge 0}$ with transition matrix $P$. Moreover, the MRW $(M_{n},S_{n})_{n\ge 0}$ has stationary drift $\mu$ as defined in \eqref{eq:positive drift} under $\pi$ and is nonarithmetic if $Q\otimes F$ has this property. Hence, a combination of Lemma \ref{lem:MRM connection} and the Markov renewal theorem \ref{thm:MRT1} yields
\begin{align*}
\lim_{t\to\infty}\V_{\ij}([t,t+h])\ =\ \frac{\sfv_{i}}{\sfv_{j}}\,\lim_{t\to\infty}\U_{\ij}([t,t+h])\ =\ \frac{\sfv_{i}\pi_{j}h}{\mu \sfv_{j}}\ =\ \frac{\sfv_{i}\sfu_{j}h}{\mu}
\end{align*}
as well as $\lim_{\,t\to-\infty}\V_{\ij}([t,t+h])=0$ for all $h>0$ and $i,j\in\cS$.\qed
\end{proof}

\begin{proof}[of Theorem \ref{Stone-type & MRT for Q}]
If $Q\otimes F$ is spread out, then so is $(M_{n},S_{n})_{n\ge 0}$. Therefore, by another use of Lemma \ref{lem:MRM connection} in combination with Theorem \ref{thm:MRM spread out}, the assertions of the theorem follow directly when observing that $\V=D\,\U^{1}D^{-1}+D\,\U^{2}D^{-1}$ provides a Stone-type decomposition of $\V$. Further details can be omitted.\qed
\end{proof}

\begin{proof}[of Theorem \ref{MRT2 for Q}]
Recall that $\wh{g}(t):=D^{-1}g(t)=(\sfv_{i}^{-1}g_{i}(t))_{i\in\cS}$. Then it is easily seen that $\wh{g}$ is $\pi$-directly Riemann intgrable iff $g$ is $\sfu$-directly integrable, and that $\wh{g}$ satisfies 
\eqref{eq:dRi3'}, \eqref{eq:dRi5'} iff $g$ itself satisfies \eqref{eq:dRi3}, \eqref{eq:dRi5}.
Further observing that 
$$ \V*g\ =\ (D\,\U\,D^{-1})*D\,\wh{g}\ =\ D\,\U*\wh{g} $$
all assertions are directly inferred from Theorem \ref{thm:MRT2} or Theorem \ref{thm:MRT3}, when applied to $\U*\wh{g}$.\qed
\end{proof}

\begin{proof}[of Theorem \ref{MRE}]
The fact that $Z^{*}\in\cL$ follows directly from Theorem \ref{MRT2 for Q} so that we may immediately turn to the uniqueness assertions regarding the Markov renewal equation
\begin{equation}\label{eq:MRE}
Z\ =\ z+(Q\otimes F)*Z.
\end{equation}
Note that, if $Z$ is in $\cL$ and a solution to \eqref{eq:MRE}, then $\wh{Z}=D^{-1}Z$ is in the same class (with respect to $P$, thus replacing $\sfv$ with $(1,1,...)^{\top}$ in the definition of $\cL$) and a solution to the probabilistic counterpart of \eqref{eq:MRE}, viz.
\begin{equation}\label{eq:MRE2}
\wh{Z}\ =\ \wh{z}+(P\otimes F)*\wh{Z}.
\end{equation}
Hence we may assume w.l.o.g.\ that $Q=P$, $\sfv=(1,1,...)^{\top}$ and thus $\wh{Z}=Z$. Given any further solution $Z'\in\cL_{0}(Z^{*})$, the difference $\Delta:=Z'-Z^{*}$ is an element of $\cL_{0}$ and a solution to the homogeneous equation $\Delta=(P\otimes F)*\Delta$,
thus
$ \Delta_{i}(t)=\Erw_{i}\Delta(M_{1},t-S_{1}) $
and then upon iteration
$$ \Delta_{i}(t)\ =\ \Erw_{i}\Delta(M_{n},t-S_{n}) $$
for all $t\in\R$, $n\in\N$ and $i\in\cS$. This shows that, for all $i\in\cS$, $(\Delta(M_{n},t-S_{n}))_{n\ge 0}$ forms a bounded $\Prob_{i}$-martingale which thus converges $\Prob_{i}$-a.s.\ to a limit. But the latter equals 0, for
$$ \lim_{n\to\infty}\Delta(M_{n},t-S_{n})\ =\ \lim_{n\to\infty}\Delta(i,t-S_{\sigma_{n}(i)})\ =\ 0, $$
where as before the $\sigma_{n}(i)$ denote the a.s.\ finite return times to $i$ of the chain $(M_{n})_{n\ge 0}$. If $\cS$ is finite or $Z^{*}\in\cL_{0}$, then $\cL_{0}(Z^{*})=\cL_{0}$ and the previous argument extends to all solutions $Z'\in\cL_{0}$.\qed
\end{proof}

\begin{proof}[of Theorem \ref{Choquet-Deny}]
Given a solution $Z\in\cC_{b}$ of the homogeneous Markov renewal equation $Z=(Q\otimes F)*Z$, the function $\wh{Z}$ is a bounded, component-wise continuous solution to $\wh{Z}=(P\otimes F)*\wh{Z}$ and therefore $(\wh{Z}(M_{n},t-S_{n}))_{n\ge 0}$ a bounded $\Prob_{i}$-martingale for all $i\in\cS$. Using the Optional Sampling Theorem, it follows that
$$ \wh{Z}_{i}(t)\ =\ \Erw_{i}\wh{Z}(M_{\sigma(i)},t-S_{\sigma(i)})\ =\ \Erw_{i}\wh{Z}_{i}(t-S_{\sigma(i)}) $$
for all $i\in\cS$. In other words, $\wh{Z}_{i}$ forms a bounded, continuous solution to the ordinary Choquet-Deny equation $\wh{Z}_{i}=\wh{F}_{i}*\wh{Z}_{i}$ for each $i\in\cS$, where $\wh{F}_{i}$ denotes the law of $S_{\sigma(i)}$ under $\Prob_{i}$. Since $\wh{F}_{i}$ is nonarithmetic (Lemma \ref{lem:lattice-type}) and $\wh{Z}_{i}$ is continuous, the latter function must equal a constant $c_{i}$ (see \cite[p.\ 382]{Feller:71}). By another appeal to the Optional Sampling Theorem, now for distinct $i,j\in\cS$, we find that
$$ c_{i}\ =\ \wh{Z}_{i}(t)\ =\ \Erw_{i}\wh{Z}_{j}(t-S_{\sigma(j)})\ =\ c_{j}, $$
where $\Prob_{i}(\sigma(j)<\infty)=1$ is guaranteed by the recurrence of $(M_{n})_{n\ge 0}$. Consequently, $\wh{Z}_{i}\equiv c$ for all $i\in\cS$ and some $c\in\R$ as asserted.\qed
\end{proof}

\section{Three examples}\label{sec:examples}

Quasi-stochastic matrices arise in various areas of applied probability, typically in connection with an exponential change of measure. For illustration we present three examples here but make no attempt of complete elaboration of all technical details.

\subsection{Supremum of a Markov random walk with negative drift}\label{MMRW}

Consider a nonarithmetic MRW $(M_{n},S_{n})_{n\ge 0}$ with recurrent driving chain $(M_{n})_{n\ge 0}$ having finite state space $\cS$, transition matrix $P=(p_{\ij})_{i,j\in\cS}$ and stationary distribution $\pi$. Let $G_{\ij}$ be the conditional distribution of $X_{n}$ given $(M_{n-1},M_{n})=(i,j)$, and denote by $\phi_{\ij}(\lambda):=\int e^{\lambda x}\,G_{\ij}(dx)$ its moment generating function. We are interested in the asymptotic tail behavior of the supremum $W:=\sup_{n\ge 0}S_{n}$ under the following additional assumptions:
\begin{description}[(A2)]
\item[(B1)] The stationary drift $\mu:=\Erw_{\pi}S_{1}$ of $(M_{n},S_{n})_{n\ge 0}$ is negative.
\item[(B2)] There exists $\lambda>0$ such that the spectral radius (maximal positive eigenvalue) $\rho(Q)$ of $Q=P_{\lambda}:=(p_{\ij}\phi_{\ij}(\lambda))_{i,j\in\cS}$ is one, i.e.
$$ \lim_{n\to\infty}\left(\Erw_{i}e^{\lambda S_{n}}\right)^{1/n}\ =\ \lim_{n\to\infty}\left(\Erw_{\pi}e^{\lambda S_{n}}\right)^{1/n}\ =\ 1 $$
for all $i\in\cS$.
\end{description}
Clearly, $Q$ is quasi-stochastic with positive left and right eigenvectors $\sfu,\sfv$, respectively, satisfying \eqref{eigenvalue one} and \eqref{Perron eigenvector normalized}. Put $F_{\ij}(dx)=\phi_{\ij}(\lambda)^{-1}e^{\lambda x}\,G_{\ij}(dx)$ for $i,j\in\cS$ and define the probability measures $\Prob_{i}^{\lambda}$, $i\in\cS$, by
\begin{align*}
\Prob_{i}^{\lambda}(M_{k}=i_{k},X_{k}\le t_{k},\,1\le k\le n)\ :=\ &\frac{\sfv_{i_{n}}}{\sfv_{i}}q_{ii_{1}}\cdot...\cdot q_{i_{n-1}i_{n}}\,F_{ii_{1}}(t_{1})\cdot...\cdot F_{i_{n-1}i_{n}}(t_{n})\\
=\ &\frac{1}{\sfv_{i}}\Erw_{i}\sfv_{M_{n}}e^{\lambda S_{n}}\prod_{k=1}^{n}\1_{\{M_{k}=i_{k},X_{k}\le t_{k}\}}
\end{align*}
for $n\in\N$, $i_{1},...,i_{n}\in\cS$ and $t_{1},...,t_{n}\in\R$. Note that the last relation extends to all stopping times $\sigma$ for $(M_{n},X_{n})_{n\ge 0}$, viz.
\begin{align}
\begin{split}\label{P_i^lambda}
\Prob_{i}^{\lambda}(M_{k}=i_{k},X_{k}\le t_{k},\,&1\le k\le \sigma<\infty)\\
&=\ \frac{1}{\sfv_{i}}\Erw_{i}\sfv_{M_{\sigma}}e^{\lambda S_{\sigma}}\1_{\{\sigma<\infty\}}\prod_{k=1}^{\sigma}\1_{\{M_{k}=i_{k},X_{k}\le t_{k}\}},
\end{split}
\end{align}
and this particularly implies
$$ \Erw_{i}e^{\lambda S_{\sigma(i)}}\ =\ \Prob_{i}^{\lambda}(\sigma(i)<\infty)\ =\ 1 $$
for all $i\in\cS$. From these settings, it follows easily that $(M_{n},S_{n})_{n\ge 0}$ is still a MRW under the $\Prob_{i}^{\lambda}$. Its recurrent driving chain has transition matrix $D^{-1}QD$ with $D=\text{diag}(\sfv_{i},i\in\cS)$ as in the previous sections and stationary distribution $\wh{\pi}=(\sfu_{i}\sfv_{i})_{i\in\cS}$. The conditional law of $X_{n}$ given $(M_{n-1},M_{n})=(i,j)$ is now $F_{\ij}$. Assumption (B1) in combination with the convexity of $\alpha\mapsto\log\rho(P_{\alpha})$ further implies that the stationary drift $\Erw_{\wh{\pi}}^{\lambda}S_{1}$ is positive.

\vspace{.1cm}
Turning to $W=\sup_{n\ge 0}S_{n}$, let us define its tail function $H_{i}(t):=\Prob_{i}(W>t)\1_{[0,\infty)}(t)$ for $i\in\cS$. In the special case of i.i.d. $X_{1},X_{2},...$, it is a well-known fact that $W$ forms a solution to Lindley's equation
$$ W\ \stackrel{d}{=}\ (X+W)^{+}, $$
where $X$ forms a copy of the $X_{n}$ and is independent of $W$. In the presence of a Markovian environment as assumed here, the equation takes the more general form
$$ W\ \stackrel{d}{=}\ (X_{1}+W_{1})^{+}\quad\text{under }\Prob_{i} $$
for each $i\in\cS$, where $\Prob_{i}(W_{1}\in\cdot|M_{1}=j,X_{1}=x)=\Prob_{j}(W\in\cdot)$ for all $j\in\cS$, and from this equation we infer the Wiener-Hopf-type integral equations
\begin{equation}\label{WHIE for W}
H_{i}(t)\ =\ \Prob_{i}(X_{1}>t)\ +\ \sum_{j\in\cS}p_{\ij}\int_{(-\infty,t]}H_{j}(t-x)\ G_{\ij}(dx).
\end{equation}
for $i\in\cS,\,t\ge 0$. On the other hand, one can also easily verify that
\begin{align}
\begin{split}\label{MRE for W}
H_{i}(t)\ &=\ \Prob_{i}(S_{\sg}>t,\sg<\infty)\ +\ \Erw_{i}H_{M_{\sg}}(t-S_{\sg})\1_{\{\sg<\infty\}}\\
&=\ \Prob_{i}(S_{\sg}>t,\sg<\infty)\ +\ \sum_{j\in\cS}p_{\ij}^{>}\int_{(0,t]}H_{j}(t-x)\ G_{\ij}^{>}(dx)
\end{split}
\end{align}
for $i\in\cS,\,t\ge 0$, where $\sg:=\inf\{n\ge 1:S_{n}>0\},\,p_{\ij}^{>}:=\Prob_{i}(M_{\sg}=j,\sg<\infty)$, and
$$ G_{\ij}^{>}\ :=\ \Prob_{i}(S_{\sg}\in\cdot|M_{\sg}=j,\sg<\infty). $$
\eqref{MRE for W} is a one-sided Markov renewal equation of defective type, for (B1) entails $\Prob_{i}(\sg<\infty)<1$ for at least one $i\in\cS$. Multiplying it with $e^{\lambda t}$ and defining 
\begin{align*}
&q_{\ij}^{>}:=p_{\ij}^{>}\int e^{\lambda x}G_{\ij}^{>}(dx),\quad
F_{\ij}^{>}(dx)=(\int e^{\lambda y}G_{\ij}^{>}(dy))^{-1}e^{\lambda x}\,G_{\ij}^{>}(dx),\\
&Z_{i}(t):=e^{\lambda t}H_{i}(t)\quad\text{and}\quad
z_{i}(t):=e^{\lambda t}\Prob_{i}(S_{\sg}>t,\sg<\infty)\1_{[0,\infty)}(t)
\end{align*}
we obtain
\begin{equation}\label{transformed MRE for W}
Z_{i}(t)\ =\ z_{i}(t)\ +\ \sum_{j\in\cS}q_{\ij}^{>}\int_{(0,t]}Z_{j}(t-x)e^{\lambda x}\ F_{\ij}^{>}(dx)
\end{equation}
for $i\in\cS,\,t\ge 0$, thus $Z=z+(Q^{>}\otimes F^{>})*Z$ with $Q^{>},F^{>}$ having obvious meanings. By \eqref{P_i^lambda},
\begin{align}\label{Q> quasi-stochastic}
\sum_{j\in\cS}q_{\ij}^{>}\sfv_{j}\ =\ \Erw_{i}\sfv_{M_{\sg}}e^{\lambda S_{\sg}}\ =\ \sfv_{i}\,\Prob_{i}^{\lambda}(\sg<\infty)\ =\ \sfv_{i}
\end{align}
for all $i\in\cS$, which shows that $Q^{>}$ has maximal eigenvalue 1 and is therefore quasi-stochastic if it is also irreducible. The latter need not be true, but it can be shown that $Q^{>}$ is irreducible on $\cS':=\{i:\sfu_{i}^{>}>0\}$, where $\sfu^{>}$ denotes the unique left eigenvector of $Q^{>}$ satisfying $\sum_{i\in\cS}\sfu_{i}^{>}=1$. We omit a further discussion of this issue and just mention that $\cS'$ is in fact the maximal irreducibility class of the strictly ascending ladder chain $(M_{\sigma_{n}^{>}})_{n\ge 0}$ associated with $(M_{n},S_{n})_{n\ge 0}$ where $\sg_{1}=\sg$ and $\sg_{n}:=\inf\{k>\sg_{n-1}:S_{\sg_{n-1}+k}>S_{\sg_{n-1}}\}$ for $n\ge 2$. Let us also mention here that
$(M_{\sg_{n}},S_{\sg_{n}})_{n\ge 0}$ is again nonarithmetic (see \cite{Alsmeyer:00}) which in turn implies that $F^{>}$ is nonarithmetic.
Since $\Erw_{i}e^{\lambda S_{\sg}}<\infty$ for each $i\in\cS$ and thus also
$$ \Erw_{\sfu^{>}}e^{\lambda S_{\sg}}\ =\ \sum_{i\in\cS'}\sfu_{i}^{>}\int_{(0,\infty)}\lambda e^{\lambda t}\,\Prob_{i}(S_{\sg}>t)\ dt\ <\ \infty $$
follows from \eqref{Q> quasi-stochastic} and $\sfv_{*}:=\min_{i\in\cS}\sfv_{i}>0$,
one can easily verify (see \cite[Lemma 3.6.2]{Alsmeyer:91} for a similar argument) that
$z$ is $\sfu^{>}$-directly Riemann integrable, in particular bounded. With the help of our results in Section \ref{sec:intro} we may finally conclude that $Z_{i}(t)=e^{\lambda t}\,\Prob_{i}(W>t)$ exists and is finite for each $i\in\cS'$. The same may indeed be shown for all $i\in\cS$. A further derivation of the form of these limits is omitted, but we refer to \cite[Ch. VI]{Asmussen:00} for a similar and more extensive treatment in the context of collective risk theory. The tail behavior of $W$ for a different regime is studied in \cite{AlsSgib:99} by combining Banach algebra and Wiener-Hopf factorization techniques.

In principle the previous considerations remain valid if the modulating chain has infinite state space $\cS$. However, the quasi-stochasticity of $Q$ and $Q^{>}$ is a more difficult matter and thus requires additional arguments because we cannot resort to Perron-Frobenius theory for (finite) nonnegative matrices.

\subsection{Age-dependent multitype branching processes}\label{subsec:multitype}

This is an example from the class of multi-type Crump-Mode-Jagers processes. We refer to the Mode's book \cite[Chapter 3]{Mode:71a} for more detailed information and further mention an article by the same author about a related model used for cell-cycle analysis \cite{Mode:71b}.

\vspace{.1cm}
Consider a population stemming from one ancestor born at time 0 which may be of any type $s\in\cS=\{1,...,m\}$. At the end of its life, each individual of type $i$ gives birth to a random number of offspring of type $j$ with finite mean $\mu_{\ij}$ for any $j\in\cS$ and has a nonarithmetic lifetime distribution $G_{i}$ on $(0,\infty)$. Moreover, all individuals behave independently. We are interested in the asymptotic behavior of $S(t)=(S_{\ij}(t))_{i,j\in\cS}$,
where $S_{\ij}(t)$ denotes the mean number of type $j$ individuals alive at time $t\ge 0$ when starting from one individual of type $i$. For simplicity, let the numbers of offspring be independent of the lifetime of an individual. Put $\overline{G}_{i}:=1-G_{i}$. Then a standard renewal argument leads to
\begin{equation*}
S_{\ij}(t)\ =\ \delta_{\ij}\overline{F}_{i}(t)+\sum_{k=1}^{m}\mu_{ik}\int_{(0,t]}S_{kj}(t-x)\ G_{k}(dx)\quad (t\ge 0)
\end{equation*}
for all $1\le i,j\le m$, that is $S=g+(M\otimes G)*S$ with $M:=(\mu_{\ij})_{1\le i,j\le m}$,
$$ g(t)\ :=\ \begin{pmatrix}
\overline{G}_{1}(t)&&0\\ &\ddots&\\ 0&&\overline{G}_{m}(t)
\end{pmatrix}
\quad\text{and}\quad 
G(t)\ :=\ \begin{pmatrix}
G_{1}(t)&\ldots&G_{1}(t)\\ &\ddots&\\ G_{m}(t)&\ldots&G_{m}(t)
\end{pmatrix} $$
Here $S(t),z(t)$ are matrices instead of vectors, but we may of course consider their column vectors $S_{\bullet j}(t)=(S_{\ij}(t))_{1\le i\le m},\,g_{\bullet j}(t)=(\delta_{\ij}\overline{G}_{j}(t))_{1\le i\le m}$ separately, or any linear combination $\sfv^{\top}S(t)=\sum_{j=1}^{m}\sfv_{j}S_{\bullet j}(t)$.

Now consider $\alpha\in\R$ such that $\phi_{i}(\alpha):=\int e^{-\alpha t}\,G_{i}(dt)<\infty$ for each $i=1,...,m$. Defining $Z(t):=e^{-\alpha t}S(t)$, we then find that $Z=z+(Q\otimes F)*Z$ with $z(t):=e^{-\alpha t}g(t)$, $Q:=(m_{\ij}\phi_{i}(\alpha))_{1\le i,j\le m}$, and
$$ F(t)\ :=\ \begin{pmatrix}
F_{1}(t)&\ldots&F_{1}(t)\\ &\ddots&\\ F_{m}(t)&\ldots&F_{m}(t)
\end{pmatrix},\quad\text{where }F_{i}(t)\ :=\ \phi_{i}(\alpha)^{-1}\int_{[0,t]}e^{-\alpha x}\ G_{i}(dx). $$
If $\alpha$ -- called Malthusian parameter of the population -- can be chosen such that $Q$ has maximal eigenvalue $1$ and is primitive, thus $Q^{n}$ a strictly positive matrix for some $n\in\N$ (see \cite{Seneta:06}), then the results of Section \ref{sec:intro} can be utilized to determine the limit of $e^{-\alpha t}S(t)$ as $t\to\infty$. In principle, these considerations may be extended to the case of infinite type space $(\cS=\N)$ in the sense that the above Markov renewal equations remain valid. On the other hand, as in the previous example, the quasi-stochasticity of $Q$ including the therefore necessary existence of the Malthusian parameter $\alpha$ is more delicate.

\vspace{.1cm}
We finally note that other functionals of the described population may be studied in a similar manner. For example, if $A_{\ij}(t)$ denotes the average total age of all type $j$ individuals alive at time $t$ when the ancestor of the population is of type $i$, then it is readily verified that $A(t)=(A_{\ij}(t))_{1\le i,j\le m}$ satisfies the Markov renewal equation $A=f+(M\otimes G)*A$ with $M,G$ as before and
$$ f(t)\ :=\ \left(\delta_{\ij}\,t\overline{G}_{i}(t)\right)_{1\le i,j\le m}. $$

\subsection{Random difference equations in Markovian environment}\label{subsec:RDE}

Let $(A_{n},B_{n})_{n\in\Z}$ be a doubly infinite stationary ergodic sequence and consider  the random difference equation
\begin{equation}\label{RDE}
Y_{n}\ =\ A_{n}Y_{n-1}+B_{n}
\end{equation}
for $n\ge 0$. It was shown by Brandt \cite{Brandt:86} that, if
\begin{equation}\label{eq:RDE moment}
\Erw\log|A_{0}|<0\quad\text{and}\quad\Erw\log^{+}|B_{0}|<\infty,
\end{equation}
then a stationary solution of $(Y_{n})_{n\ge 0}$ exists and may be realized by defining
$$ Y_{0}\ =\ B_{0}+\sum_{n\ge 0}A_{-n}\cdot...\cdot A_{0}\,B_{-n-1}. $$
Regarding the existence and properties of the stationary law of $Y_{0}$ (often called \emph{perpetuity}), many papers have dealt with the situation when the $(A_{n},B_{n})$ are i.i.d.\ and possibly multivariate, see \cite{Vervaat:79,Goldie:91,GolMal:00,AlsIksRoe:09,Kesten:73,LePage:83,BurDamGui:09,AlsMen:12}. The case when $(A_{n})_{n\in\Z}$ forms an irreducible stationary Markov chain taking values in a finite subset $\cS$ of $\R$ and the $B_{n}$ are i.i.d.\ and independent of the $A_{n}$ was treated by de Saporta \cite{Saporta:05b}, see also \cite{Roitershtein:07,RoiterZhong:13} for the more general case of continuous state space $\cS$.

\vspace{.1cm}
Let us take a closer look at the situation treated in \cite{Saporta:05b}, for simplicity confining ourselves to the case when $\cS\subset (0,\infty)$, but allowing that $\cS$ is an infinite countable set. Denote by $P=(p_{ss'})_{s,s'\in\cS}$ the transition matrix of $(A_{n})_{n\ge 0}$ and by $\pi=(\pi_{s})_{s\in\cS}$ its unique stationary distribution.
Note that the dual backward chain $(A_{-n})_{n\ge 0}$ has transition probabilities $\wh{p}_{ss'}=\pi_{s'}p_{s's}/\pi_{s}$.

Being interested in $\Prob(\pm Y_{1}>t,A_{0}=s)$ for $(s,t)\in\cS\times\R$, observe that, by \eqref{RDE},
\begin{equation*}
\Prob(\pm Y_{1}>t,A_{1}=s)\ =\ \Prob(\pm sY_{0}>t,A_{1}=s)+\psi_{s}^{\pm}(t),
\end{equation*}
where
\begin{equation*}
\psi_{s}^{\pm}(t)\ :=\ \Prob(\pm sY_{0}+B_{1}>t,A_{1}=s)-\Prob(\pm sY_{0}>t,A_{1}=s).
\end{equation*}
For $\alpha$ still to be specified, define the smoothed tail functions
$$ Z_{s}^{\pm}(t)\ :=\ \frac{1}{\pi_{s}e^{t}}\int_{0}^{e^{t}}u^{\alpha}\,\Prob(\pm sY_{1}>u,A_{1}=s)\ du $$
and $z_{s}^{\pm}(t):=\pi_{s}^{-1}e^{-t}\int_{0}^{e^{t}}u^{\alpha}\,\psi_{s}^{\pm}(u)\,du$. Put $F_{ss'}(t):=\1_{[\log s,\infty)}(t)$. Then it is not difficult to show (see \cite[Section 3]{Saporta:05b} for details) that
\begin{equation*}
Z_{s}^{\pm}(t)\ =\ z_{s}^{\pm}(t)\ +\ s^{\alpha}\sum_{s'\in\cS}\wh{p}_{ss'}F_{ss'}*Z_{s'}^{\pm}(t)
\end{equation*}
for all $(s,t)\in\cS\times\R$. It follows that $Z^{+}(t)=(Z_{s}^{+}(t))_{s\in\cS}$ and $Z^{-}(t)=(Z_{s}^{-}(t))_{s\in\cS}$ both satisfy the Markov renewal equation $Z=z+(Q\otimes F)*Z$ with $z(t)=z^{+}(t)=(z_{s}^{+}(t))_{s\in\cS}$ and $z(t)=z^{-}(t)=(z_{s}^{-}(t))_{s\in\cS}$, respectively, and with
$$ Q\ =\ \left(s^{\alpha}\wh{p}_{ss'}\right)_{s,s'\in\cS}\ =\ \left(s^{\alpha}\pi_{s'}p_{s's}/\pi_{s}\right)_{s,s'\in\cS}. $$
Therefore the asymptotic behaviour of $Z^{+}(t)$ and $Z^{-}(t)$ as $t\to\infty$ can be determined with the help of the results in Section \ref{sec:intro} if (besides further technical assumptions) we can choose $\alpha>0$ such that $Q$ is quasi-stochastic which particularly requires that $Q$ has spectral radius one, i.e.
$$ \rho(Q)\ =\ \lim_{n\to\infty}\left(\Erw(A_{0}\cdot...\cdot A_{n-1})^{\alpha}\right)^{1/n}\ =\ 1. $$
In the case of finite $\cS$, the latter already implies quasi-stochasticity as a consequence of the Perron-Frobenius theorem, but for infinite state space this needs further inspection.

\bibliographystyle{abbrv}
\bibliography{StoPro}

\end{document}